\newtheorem{theorem}{Theorem}
\newtheorem*{theorem*}{Theorem}
\newtheorem{proposition}{Proposition}
\theoremstyle{definition}
\newtheorem{definition}{\sc Definition}
\newtheorem*{definition*}{\sc Definition}
\newtheorem{remark}{Remark}
\newtheorem*{remark*}{Remark}
\newtheorem*{example*}{\bf Example}
\newcommand{\loc}{{\rm loc}}
\newcommand{\supp}{{\rm sprt\,}}
\def\expandafter\normalsize\expandafter{%
    \normalsize
    \setlength\abovedisplayshortskip{8pt}
    \setlength\belowdisplayshortskip{8pt}
}
\begin{document}

\title[Divergence-free drifts]{On divergence-free  (form-bounded type) drifts}

\dedicatory{Dedicated to Jerry Goldstein on the occasion of his 80th birthday.}

\begin{abstract}
We develop regularity theory for elliptic Kolmogorov operator  with divergence-free drift in a large class  (or, more generally, drift having singular divergence). A key step in our proofs is ``Caccioppoli's iterations'', used in addition to the classical De Giorgi's iterations and Moser's method.
\end{abstract}

\author{Damir Kinzebulatov and Reihaneh Vafadar}

\email{damir.kinzebulatov@mat.ulaval.ca}
\email{reihaneh.vafadar-seyedi-nasl.1@ulaval.ca}

\address{Universit\'{e} Laval, D\'{e}partement de math\'{e}matiques et de statistique, Qu\'{e}bec, QC, Canada}

\keywords{Regularity theory, Kolmogorov operator, singular drift, De Giorgi-Nash-Moser theory}

\subjclass[2020]{35B65, 35B25 (primary), 35Q35 (secondary)}

\thanks{D.K.\,is supported by the NSERC (grant RGPIN-2017-05567). R.V.\,is supported by the ISM, Montr\'{e}al.}

\maketitle

\section{Introduction}

\textbf{1.~}This paper is motivated by the following question: what minimal assumptions on a vector field $b$ on $\mathbb R^d$ ($d \geq 3$) ensure that the ``classical'' regularity theory of equations
\begin{equation}
\label{eq1}
(-\Delta + b \cdot \nabla)u=0
\end{equation}
and
\begin{equation}
\label{eq2}
(-\nabla \cdot a \cdot \nabla + b \cdot \nabla)u=0
\end{equation}
is still valid? The matrix $a$ is assumed to be measurable and uniformly elliptic, i.e.
\begin{equation}
\tag{$H_{\sigma,\xi}$}
\sigma I \leq a \leq \xi I \text{ a.e. on } \mathbb R^d, \quad 0<\sigma \leq \xi<\infty,
\end{equation}
and the vector field $b$ is assumed to be divergence-free or, more generally, to have divergence in $L^1_{\loc}\equiv L^1_{\loc}(\mathbb R^d)$.
In the former case, the elliptic equation \eqref{eq1} can be viewed as a proxy to the corresponding parabolic equation with a time-inhomogeneous divergence-free vector field. This equation plays an important role in hydrodynamics, e.g.\,as the equation behind the passive tracer SDE where the drift $b$ is the velocity field obtained by solving 3D Navier-Stokes equations, see e.g.\,\cite{MK}.

One can prove e.g.\,local boundedness of weak solutions to the corresponding parabolic equation and a Harnack-type inequality requiring from a divergence-free $b$ only $\|b\|_p<\infty$ for a $p>\frac{d}{2}$, see Zhang \cite{Z}, Nazarov-Uraltseva \cite{NU}. (Here and below $\|f\|_p:=(\int_{\mathbb R^d} |f|^pdx)^{1/p}$.)
 However, to have a classical regularity theory, including H\"{o}lder continuity of solutions to \eqref{eq2}, one needs $p=d$. Informally, one arrives at $p=d$ by requiring that rescaling the operator leaves invariant the norm of $b$ (neither decreases it, otherwise the singularities of $b$ would be too weak and easy to deal with, nor increases it, otherwise this can destroy the continuity of even bounded solutions to \eqref{eq1}, see Filonov \cite{F}).
The present paper deals with scaling-invariant conditions on $b$.

Speaking of the choice of $\|b\|_d$ as a measure of singularity of $b$, it is convenient as long as one agrees that the task of verifying $\|b\|_d<\infty$ is, in principle, elementary. Nevertheless, this choice is somewhat arbitrary since it largely ignores, beyond the scaling considerations, the operator behind the equation. Much more broad  conditions on $b$ are possible:

\begin{definition}
A vector field $b:\mathbb R^d \rightarrow \mathbb R^d$ is said to be multiplicatively form-bounded if $|b| \in L^1_{\loc}$ and there exists a constant $0<\delta<\infty$ such that
$$
\langle |b|\varphi,\varphi\rangle \leq \delta\|\nabla \varphi\|_2\|\varphi\|_2 + c_\delta\|\varphi\|_2^2 \quad \forall\,\varphi \in W^{1,2}
$$
for some $c_\delta<\infty$ (written as $b \in \mathbf{MF}_\delta$). The constant $\delta$ is called a weak form-bound of $b$.

Here and below
$$
\langle g\rangle:=\int_{\mathbb R^d}gdx, \quad \langle f,g\rangle:=\langle fg\rangle
$$
(all functions considered below are real-valued), and $W^{1,p}=W^{1,p}(\mathbb R^d)$ denotes the usual Sobolev space.
\end{definition}

There is a well developed machinery that allows to verify inclusion $b \in \mathbf{MF}_\delta$. For instance, the following classes of vector fields $b$, defined in elementary terms, are contained in $\mathbf{MF}_\delta$:

1) $|b| \in L^d$, in which case $\delta$ can be chosen arbitrarily small;

2) $|b|$ in the weak $L^d$ class;

3) More generally, $|b|^2$ is in the Chang-Wilson-Wolff class \cite{CWW}, i.e.\,
$$
\sup_Q \frac{1}{|Q|}\int_Q |b|^2\, l(Q)^2 \gamma\big(|b|^2\,l(Q)^2 \big) dx<\infty,
$$
where
$\gamma:[0,\infty[ \rightarrow [1,\infty[$ is an increasing function such that
$
\int_1^\infty \frac{dt}{t\gamma(t)}<\infty.
$
The Chang-Wilson-Wolff class contains the Campanato-Morrey class
$$|b| \in L^{2s}_\loc \text{ for some $s>1$ and } \bigl(\frac{1}{|Q|}\int_Q |b(x)|^{2s} dx \bigr)^{\frac{1}{2s}} \leq c_s l(Q)^{-1} \text{ for all cubes $Q$.}$$

4) $|b|$ in the Campanato-Morrey class 
\begin{equation}
\label{cm}
|b| \in L^{s}_\loc \text{ for some $s>1$ and }  \bigl(\frac{1}{|Q|}\int_Q |b(x)|^{s} dx \bigr)^{\frac{1}{s}} \leq c_s l(Q)^{-1}  \text{ for all cubes $Q$}.
\end{equation}

5) $b$ in the Kato class of vector fields $\mathbf{K}^{d+1}_\delta$, i.e.\,$|b| \in L^1_\loc$ and $\|(\lambda-\Delta)^{-\frac{1}{2}}|b|\|_\infty \leq \delta$ for a $\lambda>0$.

In 2)-5) the value of $\delta$ is proportional to the norm of $|b|$ in respective classes.

\medskip

Below we show that $\mathbf{MF}_\delta$ contains the standard class of form-bounded vector fields $\mathbf{F}_\delta$ (see Definition \ref{fb_def}). In turn, $\mathbf{F}_\delta$ contains classes 1)-3) (we note in passing that the Chang-Wilson-Wolff class was born out of attempts by many authors to obtain a necessary and sufficient condition for ``$b \in \mathbf{F}_\delta$'' in elementary terms, see references in \cite{CWW}). The inclusions of 4) and 5) in $\mathbf{MF}_\delta$ follow from \eqref{incl_wfb} and the fact that vector fields in the Campanato-Morrey class \eqref{cm} or the Kato class are weakly form-bounded, see below. 

The class $\mathbf{MF}_\delta$ neither is contained in, nor contains another well-known class of divergence-free vector fields $\mathbf{ BMO}^{-1}$ (i.e.\,$b=\nabla F$ for skew-symmetric matrix-valued function $F$ with entries in the space ${\rm BMO}={\rm BMO}(\mathbb R^d)$ of functions of bounded mean oscillation, see Definition \ref{bmo_def}). See, however, Remark \ref{comb_rem} about combining $\mathbf{MF}_\delta$ and $\mathbf{ BMO}^{-1}$.

\medskip

In the present paper we develop De Giorgi's approach to the regularity theory of Kolmogorov operator
\begin{equation}
\label{kolm}
-\nabla \cdot a \cdot \nabla + b \cdot \nabla,
\end{equation}
 with $b \in \mathbf{MF}_\delta$, $\delta<\infty$ and ${\rm div\,}b$ satisfying some broad assumptions (see \eqref{nu}, \eqref{nu2} below). The multiplicative form-boundedness was introduced by Sem\"{e}nov \cite{S} as a condition providing a priori two-sided Gaussian bound on the heat kernel of \eqref{kolm}
and hence its a priori H\"{o}lder continuity, assuming $b$ is divergence-free. His proof of the upper Gaussian bound used Moser's method. The proof of the lower Gaussian bound in \cite {S} required a deep  modification of Nash's method. The reason is that when dealing with multiplicative form-boundedness, one cannot use one of the key instruments  in the analysis of PDEs: the quadratic inequality.
Our motivation, beyond the desire to arrive at a priori H\"{o}lder continuity of solutions to \eqref{eq2} using a somewhat simpler argument, and curiosity (in fact, the proof of Caccioppoli's inequality for multiplicatively form-bounded $b$ turned out to be rather interesting, see below), is driven by the following two goals not addressed by the other methods:

\medskip

-- \textit{A posteriori solution theory for Dirichlet problem for \eqref{eq2} and a posteriori Harnack inequality.}

\smallskip

In Theorem \ref{thm3} we prove approximation uniqueness of solution to Dirichlet problem for \eqref{eq2} with multiplicatively form-bounded $b$, going beyond the borderline case $|b| \in L^2_{\loc}$. (The approximation uniqueness means uniqueness among weak solutions that can be constructed via an approximation procedure.) The proof uses higher integrability of the gradient of solution.

\medskip

-- \textit{The minimal assumptions on the divergence of $b$.}

\smallskip

Physical applications require one to treat singular ${\rm div\,}b$.
We allow in this paper $({\rm div\,}b)_\pm$ in the class of form-bounded potentials, i.e.\,$({\rm div\,}b)_\pm \in L^1_{\loc}$ and
\begin{equation}
\label{nu}
\langle ({\rm div\,}b)_+\varphi,\varphi\rangle \leq \nu_+ \|\nabla \varphi\|_2^2 + c_{\nu_+}\|\varphi\|_2^2, \quad \nu_+<2\sigma,
\end{equation}
\begin{equation}
\label{nu2}
\langle ({\rm div\,}b)_-\varphi,\varphi\rangle \leq \nu_- \|\nabla \varphi\|_2^2 + c_{\nu_-}\|\varphi\|_2^2, \quad \nu_-<\infty.
\end{equation}
for some $c_{\nu_\pm}<\infty$, for all $\varphi \in W^{1,2}$. (Throughout the paper, given a function $f$, we denote by $f_\pm$ its positive/negative part.)
For instance, potentials in the weak $L^{\frac{d}{2}}$ class are form-bounded, but there also exist form-bounded potentials $\not\in L^{1+\varepsilon}_{\loc}$, for arbitrarily fixed $\varepsilon>0$. 

Earlier, Kinzebulatov-Sem\"{e}nov \cite{KiS3} established a priori two-sided Gaussian heat kernel bounds for \eqref{kolm} for $b \in \mathbf{MF}_\delta$, $\delta<\infty$ with ${\rm div\,}b$ in the Kato class of potentials $\mathbf{K}^d_\nu$, a proper subclass of \eqref{nu}, \eqref{nu2}. Example of a vector field $b$ such that ${\rm div\,}b$ is form-bounded but is not in the Kato class is given by e.g.\,
\begin{equation}
\label{vf}
b(x)=\kappa_+|x-x_+|^{-2}(x-x_+) - \kappa_-|x-x_-|^{-2}(x-x_-) \qquad (\kappa_\pm > 0),
\end{equation}
 where $x_\pm \in \mathbb R^d$ are fixed. Here ${\rm div\,}b=\kappa_+(d-2)|x-x_+|^{-2} - \kappa_-(d-2)|x-x_-|^{-2}$ satisfies \eqref{nu}, \eqref{nu2} with $\nu_\pm=\frac{4\kappa_\pm}{d-2}$, $c_{\nu_\pm}=0$ by Hardy's inequality. 
This vector field indeed destroys two-sided Gaussian bound, see discussion in \cite{KiS3}. 
At the level of the corresponding to \eqref{eq1}, \eqref{vf} SDE $dX_t=-b(X_t)dt+\sqrt{2}dB_t$ the first term in \eqref{vf}  forces the diffusion process $X_t$ to approach $x_+$, while the second term pushes $X_t$ away from $x_-$, i.e.\,taking into account $({\rm div\,}b)_\pm$ allows to model attraction/repulsion phenomena. 

The form-boundedness of $({\rm div\,}b)_+$ seems to be the maximal possible assumption on ${\rm div\,}b$ providing a ``classical'' regularity theory of \eqref{eq2}.
If $b$ belongs to a smaller class $\mathbf{F}_\delta$, then no assumption on the negative part of ${\rm div\,}b$ is needed, see Hara \cite{H}.

\medskip

\textbf{2.~}The starting point of De Giorgi's method is Caccioppoli's inequality (cf.\,Proposition \ref{c_prop}). Let us outline its derivation and describe ensuing difficulties when dealing with multiplicatively form-bounded $b$. 
First, we introduce some notations used throughout the paper. Let $B_r(x)$ denote the open ball in $\mathbb R^d$ of radius $r$ centered at $x$. Put $$B_r:=B_r(0).$$
Denote by $(f)_{B}$ the average of function $f$ over a ball (or some other set) $B$:
$$
(f)_{B}:=\frac{1}{|B|}\langle f\mathbf{1}_{B}\rangle, \quad |B|:={\rm Vol}\,B.
$$
Now, to prove Caccioppoli's inequality, one multiplies equation \eqref{eq2} by $u\eta$, where $u \in W^{1,2}$ is a weak solution to the equation and $\eta \in C_c^\infty(B_R)$, $R \leq 1$ is a $[0,1]$-valued function which is identically $1$ on a concentric ball of smaller radius $r<R$ and satisfies $|\nabla \eta|^2/\eta \leq c(R-r)^{-2}\mathbf{1}_{B_R}$. Integrating and using $a \in (H_{\sigma,\xi})$, one obtains right away
\begin{equation*}
\sigma \langle |\nabla u|^2\eta\rangle \leq \langle a \cdot \nabla u,u,\nabla \eta \rangle + |\langle b \cdot \nabla u,u\eta\rangle|
\end{equation*}
so, applying quadratic inequality, one has
\begin{equation}
\label{dg}
\sigma \langle |\nabla u|^2\eta\rangle \leq \epsilon \sigma\langle |\nabla u|^2\eta\rangle + \frac{\sigma}{4\epsilon}\langle u^2\frac{|\nabla \eta|^2}{\eta}\rangle + |\langle b \cdot \nabla u,u\eta\rangle| \qquad (\epsilon>0).
\end{equation}
Then, in particular,
\begin{equation}
\label{dg2}
\sigma(1-\epsilon)\langle |\nabla u|^2\mathbf{1}_{B_r}\rangle \leq \frac{c\sigma }{4\epsilon (R-r)^2}\langle u^2\mathbf{1}_{B_R}\rangle + |\langle b \cdot \nabla u,u\eta\rangle|.
\end{equation}
Thus, in the LHS of \eqref{dg2} one obtains extra information about the regularity of $u$ but on a smaller set $B_r$, provided that one can control $\langle b \cdot \nabla u,u\eta\rangle$:

\smallskip

(a) If $b \in \mathbf{F}_\delta$, $\delta<\sigma^2$ (no assumptions on ${\rm div\,}b$), then one has, using quadratic inequality
\begin{align*}
|\langle b \cdot \nabla u,u\eta\rangle| &  
\leq \alpha \langle |\nabla u|^2\eta \rangle  + \frac{1}{4\alpha}\langle |b|^2, u^2\eta \rangle \quad (\alpha>0).
\end{align*}
Now, applying $b \in \mathbf{F}_\delta$, minimizing in $\alpha$
and substituting the result in \eqref{dg} (with $\epsilon$ chosen sufficiently small), one obtains the Caccioppoli inequality.

\medskip

(b) If $b=\nabla F \in \mathbf{ BMO}^{-1}$, then one obtains, using the divergence theorem,
\begin{align*}
|\langle b \cdot \nabla u,u\eta\rangle| \leq \gamma \big\langle |\nabla u|^2\eta \big\rangle + \frac{1}{\gamma}\big\langle |F-(F)_{B_R}|^2,u^2\frac{|\nabla \eta|^2}{\eta} \big\rangle \quad (\gamma>0).
\end{align*}
The latter yields a Caccioppoli-type inequality, where the term containing $|F-(F)_{B_R}|^2$ is handled using e.g.\,the John-Nirenberg inequality. We refer to \cite{H} for details, see also \cite{SSSZ,Zh}. 

\medskip

(c) If $b \in \mathbf{MF}_\delta$, $\delta<\infty$ with, say, ${\rm div\,}b=0$, then one has
\begin{align}
|\langle b \cdot \nabla u,u\eta\rangle| & =\frac{1}{2}|\langle bu,u\nabla \eta\rangle| \leq \frac{1}{2}\langle |b|u,u|\nabla \eta|\rangle \notag \\
& \leq \frac{\delta}{2} \bigg(\|(\nabla u)\sqrt{|\nabla \eta|}\|_2 + \|u \nabla \sqrt{|\nabla \eta|}\|_2 \bigg) \, \|u\sqrt{|\nabla \eta|}\|_2 + \frac{c_\delta}{2} \|u\sqrt{|\nabla \eta|}\|^2_2 \label{m_ineq}
\end{align}
(see the proof of Proposition \ref{c_prop}). The estimates $$\sqrt{|\nabla \eta|} \leq c(R-r)^{-\frac{1}{2}}\mathbf{1}_{B_R}, \quad |\nabla \sqrt{|\nabla \eta|}| \leq c (R-r)^{-\frac{3}{2}}\mathbf{1}_{B_R}$$ present no problem. The difficulty is in the term $\|(\nabla u)\sqrt{|\nabla \eta|}\|_2$ in the RHS of \eqref{m_ineq}: one has $\nabla u$ and $\nabla \eta$ at the same time. Thus, one cannot simply transition this term to the LHS of \eqref{dg} as in (a). Furthermore, estimating $|\nabla \eta| \leq c(R-r)^{-1}\mathbf{1}_{B_R}$ in $\|(\nabla u)\sqrt{|\nabla \eta|}\|_2$  one obtains the norm of $\nabla u$ over a larger set than in the LHS of \eqref{dg2}. 
Nevertheless, it turns out that one can iterate the resulting from \eqref{dg2} and \eqref{m_ineq} inequality over balls of radii between $r$ and $R$, which leads to the sought Caccioppoli's inequality, see the proof of Proposition \ref{c_prop}.
This iteration procedure (``Caccioppoli's iterations'') is also used in Moser's method in the proof of Proposition \ref{pp_prop}, although in a slightly more sophisticated form.

\medskip

\textbf{3.~}Let us now recall the definitions of the class of form-bounded vector fields and the class $\mathbf{ BMO}^{-1}$.

\begin{definition}
\label{fb_def}
A vector field $b:\mathbb R^d \rightarrow \mathbb R^d$ is said to be form-bounded if $|b| \in L^2_{\loc}$ and there exists $\delta>0$ such that
$$
\langle |b|^2 \varphi,\varphi\rangle \leq \delta \|\nabla \varphi\|_2^2 + c_\delta \|\varphi\|_2^2 \quad \forall \varphi \in W^{1,2}
$$
for some constant $c_\delta$ (written as $b \in \mathbf{F}_\delta$). No conditions on ${\rm div\,}b$ are imposed.
\end{definition}

\begin{definition}
\label{bmo_def}
A divergence-free distributional vector field $b \in [\mathcal S']^d$ is in the class $\mathbf{ BMO}^{-1}$ if
\begin{equation*}
b=\nabla F \quad \text{ i.e. } b_k=\sum_{i=1}^d \nabla_i F_{ik}, \quad 1 \leq k \leq d,
\end{equation*}
for  matrix $F$ with entries $F_{ik}=-F_{ki} \in {\rm BMO}$. (Recall that $F_{ik} \in {\rm BMO}$ means that $F_{ik} \in L^1_{\loc}$ and
$$
\|F_{ik}\|_{\rm BMO}:=\sup_{Q}\frac{1}{|Q|}\int_{Q}|F-(F)_Q|dx<\infty,
$$
where the supremum taken over all cubes $Q \subset \mathbb R^d$ with sides parallel to the axes.)

\end{definition}

The class of form-bounded vector fields $\mathbf{F}_\delta$ contains $L^d$ class, the weak $L^d$ class, the Campanato-Morrey class and the Chang-Wilson-Wolff class, see \cite{KiS1}.  It provides a posteriori Harnack inequality and H\"{o}lder continuity of solutions to \eqref{eq2}, as long as $\delta<\sigma^2$, see Hara \cite{H}, and weak well-posedness of the corresponding to \eqref{eq1} SDE, as long as form-bound $\delta<c_d$ for  a certain explicit constant $c_d<1$, see Kinzebulatov-Sem\"{e}nov \cite{KiS2} and Kinzebulatov-Madou \cite{KiM}. It also provides a posteriori upper and/or (depending on ${\rm div\,}b$) lower  Gaussian bound on the heat kernel of Kolmogorov operator \eqref{kolm}, see Kinzebulatov-Sem\"{e}nov \cite{KiS3}. This list of results involving form-bounded drift is far from being exhaustive. See, in particular, Zhang \cite{Z} regarding elements of the regularity theory of \eqref{eq2} under supercritical (in the sense of scaling) form-boundedness type assumption on $b$.

The  quantitative role of form-bound $\delta$ in the theory of Kolmogorov operator was recognized by Kovalenko-Sem\"{e}nov \cite{KS} who proved $W^{1,p}$ estimates on solutions to \eqref{eq1} with $b \in \mathbf{F}_\delta$, where the interval of admissible $p$ expands to $[2,\infty[$ as $\delta$ is taken closer and closer to zero. These estimates, with $p$ sufficiently large, allow to construct the corresponding to \eqref{eq1} Feller semigroup. 

The class $\mathbf{ BMO}^{-1}$ contains divergence-free vector fields with entries e.g.\,in the Campanato-Morrey class \eqref{cm}, and it also contains some singular distribution vector fields \cite{KT}, which, obviously, can not be multiplicatively form-bounded. A weak solution theory of the Dirichlet problem for \eqref{eq1} with $b \in \mathbf{ BMO}^{-1}$ was developed by Zhikov \cite{Zh}. This class furthermore provides a posteriori  Harnack inequality and H\"{o}lder continuity of solutions to the parabolic counterpart of \eqref{eq2}, see Friedlander-Vicol \cite{FV}, Seregin-Silvestre-\v{S}verak-Zlato\v{s} \cite{SSSZ}, and a posteriori two-sided Gaussian bound on Kolmogorov operator \eqref{kolm}, see Qian-Xi \cite{QX}. 
Earlier, a subclass of $\mathbf{ BMO}^{-1}$ consisting of vector fields $b=\nabla B$ for skew-symmetric $B$ with entries in $L^\infty$ was considered by Osada \cite{O}.

\medskip

\textbf{4.~}We note that for $b=b_1+b_2$, where $b_1 \in \mathbf{F}_\delta$, $b_2 \in \mathbf{ BMO}^{-1}$ one has the KLMN Theorem in $L^2$ via the estimate 
$$
|\langle b \cdot \nabla u,v\rangle| \leq (\sqrt{\delta} + \|F\|_{\rm BMO})\|\nabla u\|_{2}(\|\nabla v\|_{2}+c\|v\|_2), \quad c=\frac{\sqrt{c_\delta}}{\sqrt{\delta}}
$$
(for $b_2$, using the compensated compactness estimate of \cite{CLMS}). The KLMN Theorem provides an a posteriori solution theory for \eqref{kolm} in $L^2$. This settles for such $b$'s the problem of a posteriori Harnack inequality. On the other hand, no analogues of the KLMN Theorem are known so far for $b \in \mathbf{MF}_\delta$ except in some special cases (cf.\,the first comment in Section \ref{disc_sect}).

Regarding the inclusion $\mathbf{F}_{\delta^2} \subset \mathbf{MF}_\delta$ mentioned above, in fact,
 a much stronger statement is true: $\mathbf{MF}_\delta$ contains the class of weakly form-bounded vector fields, that is, $|b| \in L^1_{\loc} $ and
\begin{equation}
\tag{$\mathbf{F}_\delta^{\scriptscriptstyle \frac{1}{2}}$}
\||b|^{\frac{1}{2}}\varphi \|^2_2 \leq \delta \|(\lambda-\Delta)^{\frac{1}{4}}\varphi \|^2_2 \quad \forall\,\varphi \in\mathcal W^{\frac{1}{2},2}
\end{equation}
for some $\lambda>0$ (which, in turn, contains  $\mathbf{F}_{\delta^2}$ with $c_\delta=\lambda\delta$, as is evident from the Heinz-Kato inequality). Here and below, $\mathcal W^{\alpha,p}$ denoted the Bessel potential space. Indeed, 
\begin{align}
\label{incl_wfb}
\langle|b|\varphi,\varphi\rangle & \leq \delta \langle (\lambda-\Delta)^{\frac{1}{2}}\varphi,\varphi \rangle \leq \delta \|(\lambda-\Delta)^{\frac{1}{2}}\varphi\|_2\|\varphi\|_2 \\
& \leq \delta \|\nabla \varphi\|_2\|\varphi\|_2 + \lambda\|\varphi\|_2^2 \quad \Rightarrow \quad b \in \mathbf{MF}_\delta.
\end{align}
Thus, compared to the standard form-boundedness, the multiplicative form-boundedness allows to gain twice in the a priori summability requirement on the vector field, i.e.\,$|b| \in L^1_{\loc}$ instead of $|b| \in L^2_{\loc}$. 

Note that $\mathbf{F}_\delta^{\scriptscriptstyle 1/2}$ contains the Kato class $\mathbf{K}^{d+1}_\delta$ (e.g.\,by interpolation) and the Campanato-Morrey class \eqref{cm} \cite{A}, hence for every $\varepsilon>0$ one can find weakly form-bounded vector fields $b$ with $|b| \not\in L^{1+\varepsilon}_{\loc}$.

See further discussion in Section \ref{disc_sect}.

\bigskip

\section{Main results} 

Our first result concerns a priori estimates for equation \eqref{eq2}, i.e.\,the coefficients will be assumed to be smooth. 
Let $\Omega \subset \mathbb R^d$ be a bounded Lipschitz domain, $d \geq 3$.

\begin{definition}
We call a constant generic 
if it depends only on $d$, $\Omega$, $\sigma$, $\xi$, $\delta$, $c_\delta$, $\nu_\pm$ and $c_{\nu_\pm}$.
\end{definition}

\begin{theorem}
\label{thm1}
Let $a \in (H_{\sigma,\xi})$ and $b \in \mathbf{MF}:=\cup_{\delta>0}\mathbf{MF}_\delta$. Assume that $a$,  $b$ are bounded smooth. Also, assume that ${\rm div\,}b={\rm div\,}b_+ - {\rm div\,}b_-$ for bounded smooth  ${\rm div\,}b_\pm \geq 0$ satisfying form-boundedness conditions \eqref{nu}, \eqref{nu2}.
Let $B_{2R}(x) \subset \Omega$, and let $u$ be a solution to \eqref{eq2} in $B_{R}(x)$. Then 
\begin{equation}
\label{grad}
\|\nabla u\|_{L^p(B_{\frac{R}{2}}(x))}\leq C_0
\end{equation}
 for some generic $p>2$ and $C_0<\infty$. Moreover, if solution $u \geq 0$ in $B_R(x)$, then it satisfies 
the Harnack inequality 
\begin{equation}
\label{harnack}
\sup_{B_{R/2}(x)}u \leq C \inf_{B_{R/2}(x)}u
\end{equation}
with generic constant $C$,
and is H\"{o}lder continuous:
\begin{equation}
\label{holder}
{\rm osc}_{B_r(x)}u \leq K\left(\frac{r}{R}\right)^{\gamma}{\rm osc}_{B_{R/2}}u, \quad 0<r<\frac{R}{2}
\end{equation}
for some generic constants $K$ and $0<\gamma<1$.
\end{theorem}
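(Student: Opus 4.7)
My plan is to follow the De Giorgi/Moser strategy as outlined in Section 2 of the introduction, with the novel ingredient being the ``Caccioppoli iterations'' needed to compensate for the fact that the multiplicative form-bound forces $\nabla u$ and $\nabla \eta$ to appear simultaneously on the RHS after we test the equation with $u\eta$. I will first establish a Caccioppoli-type inequality, then obtain higher gradient integrability via Gehring's lemma, run Moser iterations to prove the Harnack inequality, and deduce H\"older continuity by the standard oscillation-decay argument. The divergence of $b$ is handled by splitting $\langle b\cdot\nabla u,u\eta\rangle = \frac{1}{2}\langle b, \nabla(u^2)\eta\rangle$ and integrating by parts to transfer a derivative onto $\eta$ except where $\mathrm{div}\,b$ appears; the positive part is absorbed using $\nu_+<2\sigma$ in \eqref{nu}, while the negative part is controlled by \eqref{nu2}.

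\textbf{Step 1: Caccioppoli's inequality via iteration.} Fix concentric balls $B_r\subset B_R\subset B_{2R}(x)\subset\Omega$ with $r<R$, and a cutoff $\eta\in C_c^\infty(B_R)$ equal to $1$ on $B_r$ with $|\nabla\eta|^2/\eta\le c(R-r)^{-2}\mathbf{1}_{B_R}$. Testing \eqref{eq2} against $u\eta$, using $(H_{\sigma,\xi})$ and \eqref{m_ineq}, I arrive at an inequality of the form
\begin{equation*}
\sigma(1-\epsilon)\langle|\nabla u|^2\mathbf{1}_{B_r}\rangle \le \frac{c}{(R-r)^2}\langle u^2\mathbf{1}_{B_R}\rangle + \theta\,\|(\nabla u)\sqrt{|\nabla\eta|}\|_2\|u\sqrt{|\nabla\eta|}\|_2
\end{equation*}
(plus the lower order terms from $c_\delta$ and $c_{\nu_\pm}$). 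Crucially, the last term cannot be absorbed directly because $|\nabla\eta|$ lives on the \emph{annulus} $B_R\setminus B_r$, so $\|(\nabla u)\sqrt{|\nabla\eta|}\|_2$ involves $\nabla u$ on a set \emph{larger} than $B_r$. The remedy is to apply the inequality not to the pair $(r,R)$ but to a family of nested radii $r=r_0<r_1<\cdots<r_N=R$ with $r_{k+1}-r_k\sim (R-r)/N$: each application bounds $\int_{B_{r_k}}|\nabla u|^2$ in terms of a slightly larger annular integral of $|\nabla u|^2$ times a factor $\theta\sim\delta$, plus a term in $u^2$. Iterating and summing a geometric series yields the genuine Caccioppoli inequality
\begin{equation*}
\int_{B_r}|\nabla u|^2\,dx \le \frac{C}{(R-r)^2}\int_{B_R}u^2\,dx + C\int_{B_R}u^2\,dx.
\end{equation*}
The same scheme, applied with test function $(u-k)_{\pm}\eta^2$ (after a further convexity/truncation argument to preserve multiplicative form-boundedness on truncations) gives the truncated Caccioppoli inequality needed by De Giorgi.

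\textbf{Step 2: Higher integrability, Harnack, H\"older.} With Caccioppoli in hand, Sobolev--Poincar\'e on $B_R$ yields a reverse H\"older inequality for $|\nabla u|$, and Gehring's lemma produces \eqref{grad} with some generic $p>2$. For the Harnack inequality \eqref{harnack}, I will run Moser's iteration: test the equation with $\eta^2 u^{q-1}$ for $q\in\mathbb{R}\setminus\{0,1\}$, arriving at a chain of estimates analogous to those of Step 1 where the bad term $\|b|\nabla\eta||\eta u^{q-1}\|$ is again controlled by the Caccioppoli-iteration scheme (the slightly more sophisticated form alluded to in connection with Proposition \ref{pp_prop}). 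Iterating over $q\to\pm\infty$ gives $L^\infty$ bounds, and combining the $q>0$ and $q<0$ sides with a John--Nirenberg step on $\log u$ (for $q$ near $0$, test with $\eta^2/u$) closes the Harnack inequality by the standard Bombieri--Giusti scheme. Finally, \eqref{holder} follows from \eqref{harnack} applied to $M-u$ and $u-m$ on shrinking balls, giving a geometric decay of oscillations with generic exponent $\gamma\in(0,1)$.

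\textbf{Main obstacle.} The heart of the matter is Step 1: establishing that the iteration over radii really converges with a constant depending only on $\delta,c_\delta,\sigma,\xi$, rather than blowing up as $N\to\infty$. This requires careful bookkeeping of how the factor $\theta$ picks up powers of $\delta$ versus how the geometric factor $(r_{k+1}-r_k)^{-1}$ grows, and a choice of the number of iterations $N$ as a (generic) constant depending on $\delta$. The ${\rm div}\,b$ terms add a parallel bookkeeping: the $\nu_+<2\sigma$ gap is what allows absorption after summation, while the $\nu_-$ term enters only with a multiplicative factor on $\|u\|_2^2$ and so is harmless. Once the iteration lemma is proved in the form needed, the rest of the argument is structural.
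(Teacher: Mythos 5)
Your overall roadmap (Caccioppoli via nested-radii iteration, then Gehring, De Giorgi/Moser, John--Nirenberg on $\log u$, Harnack, oscillation decay) matches the paper's, and you correctly identify that the obstruction is $\nabla u$ leaking onto the larger ball through $\|(\nabla u)\sqrt{|\nabla\eta|}\|_2$. The gap is in how you propose to close the iteration. After applying $b\in\mathbf{MF}_\delta$, the bad term in the pre-Caccioppoli inequality is $C_1(r_2-r_1)^{-1}\|(\nabla v)\mathbf{1}_{B_{r_2}}\|_2\,\|v\mathbf{1}_{B_{r_2}}\|_2$ --- the annular gradient norm to the \emph{first} power, multiplied by a $\delta$-dependent constant, not ``$\theta\|(\nabla v)\mathbf{1}_{B_{r_2}}\|_2^2$ with $\theta\sim\delta$'' as your plan describes. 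Since the theorem holds for every finite $\delta$, no argument that needs a small prefactor in front of the annular gradient square can close it, and your idea of taking ``the number of iterations $N$ as a generic constant depending on $\delta$'' points in the wrong direction: as written, your Step 1 does not close for large $\delta$.

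The paper closes the iteration by dividing by $\|v\mathbf{1}_{B_R}\|_2^2$, setting $Z(\rho):=\|(\nabla v)\mathbf{1}_{B_\rho}\|_2^2/\|v\mathbf{1}_{B_R}\|_2^2$, plugging in the geometric radii $r_1=R-(R-r)/2^{n-1}$, $r_2=R-(R-r)/2^{n}$, and rescaling the resulting recurrence to $y_n^2\le 1+(R-r)^2+y_{n+1}$; the supremum $\beta:=\sup_n y_n$ is a priori finite (because $\nabla v\in L^2(B_R)$), satisfies $\beta^2\le 1+(R-r)^2+\beta$, hence is bounded by a generic constant, and this bounds $y_1$ and so $Z(r)$. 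An equivalent route (perhaps what you had in mind) is to first apply Young's inequality to obtain $Z(r_1)\le\theta Z(r_2)+C(\theta)(r_2-r_1)^{-2}+C_2$ with a \emph{free} $\theta\in(0,1)$ (say $\theta=\tfrac12$, unrelated to $\delta$) and then invoke the widow lemma (Giusti, Lemma 6.1); either way the iteration is infinite and independent of any smallness of $\delta$. Two smaller points: (i) no ``convexity/truncation argument'' is needed --- one tests directly with $(u-c)_\pm\eta$ and applies $b\in\mathbf{MF}_\delta$ to the Sobolev function $(u-c)_\pm\sqrt{|\nabla\eta|}$; (ii) the paper runs the sup bound via De Giorgi from the single truncated Caccioppoli inequality rather than via Moser power iteration with $\eta^2u^{q-1}$, so the Caccioppoli-type iteration over radii is needed only once for the sup bound; it reappears separately in the Moser step that bounds $\log u$ in BMO (tested with $u^{-1}\zeta_m^2$), not at every power $q$.
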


Of course, a key point of Theorem \ref{thm1} is that constants $C_0$, $C$, $K$, $\gamma$ do not depend on the smoothness of $a$, $b$, ${\rm div\,}b$ or the $L^\infty$ norms of the last two.

Theorem \ref{thm1} uses the standard De Giorgi's iterations and Moser's method, with the addition of ``Caccioppoli's iterations'' in Propositions \ref{c_prop} and \ref{pp_prop}.

\begin{remark}
\label{comb_rem}

Theorem \ref{thm1} extends to vector fields
\begin{equation}
\label{bb}
b=b_1+b_2, \quad b_1 \in \mathbf{MF}, \quad b_2 \in \mathbf{BMO}^{-1}
\end{equation}
where ${\rm div\,}b_1$ satisfies the assumptions of Theorem \ref{thm1}, see Remark \ref{bb_rem} in the end of the proof. See also comment 4 in Section \ref{disc_sect} regarding Nash's method.
\end{remark}

We now turn to the question of weak well-posedness of Dirichlet problem
\begin{equation}
\label{d0}
\left\{
\begin{array}{l}
(-\nabla \cdot a \cdot \nabla + b \cdot \nabla)u=0 \quad \text{ in } \Omega  \\
u-g \in W_0^{1,2}(\Omega),
\end{array}
\right.
\end{equation}
where $a \in (H_{\sigma,\xi})$ and $b \in \mathbf{MF}$ are assumed to be only measurable, ${\rm div\,}b \in L^1_{\loc}$ satisfies \eqref{nu}, \eqref{nu2}, and
\begin{equation}
\label{g}
g \in L^\infty, \quad \|g\|_{W^{2,2}}<\infty.
\end{equation}

\begin{definition}  
\label{weak_sol_def}
We say that $u \in W^{1,2}_{\loc}(\Omega)$ is a weak solution to equation
\begin{equation}
\label{eq2_}
(-\nabla \cdot a \cdot \nabla + b \cdot \nabla)u=0 \quad \text{ in } \Omega 
\end{equation}
 if 

{\rm (\textit{i})} $bu \in [L^1_{\loc}(\Omega)]^d$, $({\rm div\,}b)u \in L^1_{\loc}(\Omega)$ and

{\rm (\textit{ii})} identity
$$
\langle a \cdot \nabla u,\nabla \varphi\rangle - \langle bu,\nabla \varphi\rangle - \langle ({\rm div\,}b)u,\varphi\rangle =0, \quad \forall\varphi \in C_c^\infty(\Omega)
$$ 
holds.
\end{definition}

If $u$ is locally bounded, then (\textit{i}) is trivially satisfied.

\medskip

We will construct a weak solution to \eqref{d0} via an approximation procedure.
Let us fix
$C^\infty$ smooth bounded $b_n$ such that
\begin{equation}
\label{b_n}
b_n \in \mathbf{MF}_\delta \;\;\text{with the same $c_\delta$ (so, independent of $n$)},  \quad 
b_n \rightarrow b \quad \text{ in } [L^1_{\loc}]^d, 
\end{equation}
\begin{equation}
\label{div_n}
\begin{array}{c}
{\rm div\,}b_n={\rm div\,}b_{n,+} - {\rm div\,}b_{n,-}, \\
0 \leq {\rm div\,}b_{n,\pm} \in C^\infty \cap L^\infty \text{ satisfy \eqref{nu}, \eqref{nu2} with the same  $\nu_\pm$, $c_{\nu_\pm}$ (so, independent of $n$)}, 
\end{array}
\end{equation}
\begin{equation}
\label{div_n2}
  {\rm div\,}b_{n,\pm} \rightarrow ({\rm div\,}b)_{\pm} \text{ in } L^1_{\loc}.
\end{equation}
We emphasize that ${\rm div\,}b_{n,\pm}$ above is any pair of non-negative functions such that identity ${\rm div\,}b_n={\rm div\,}b_{n,+} - {\rm div\,}b_{n,-}$ holds.
We discuss a construction of such $b_n$ in Section \ref{approx_sect}.

We fix bounded smooth $g_n$ such that
\begin{equation}
\label{g_n}
g_n \rightarrow g \quad \text { weakly in } W^{2,2}_{\loc}, \quad \|g_n\|_\infty \leq \|g\|_\infty.
\end{equation}
Let us also fix $C^\infty$ smooth $a_n \in (H_{\sigma,\xi})$,
\begin{equation}
\label{a_n}
a_n \rightarrow a \quad \text { in } [L^1_{\loc}]^{d \times d}.
\end{equation}

\begin{theorem}
\label{thm2}
Let $a \in (H_{\sigma,\xi})$, $b \in \mathbf{MF}$ with ${\rm div\,}b \in L^1_{\loc}$ satisfying \eqref{nu}, \eqref{nu2}. Let $(a_n,b_n,g_n)$ satisfy \eqref{b_n}-\eqref{a_n}.
Then solutions $u_n$ to the Dirichlet problems 
\begin{equation}
\label{d2}
\left\{
\begin{array}{l}
(-\nabla \cdot a_n \cdot \nabla + b_n \cdot \nabla)u_n=0 \quad \text{ in } \Omega  \\
u_n=g_n \text{ on } \partial \Omega
\end{array}
\right.
\end{equation}
converge weakly in $W^{1,2}_{\loc}(\Omega)$ as $n \rightarrow \infty$, possibly after passing to a subsequence, to a  weak solution to Dirichlet problem \eqref{d0}. This weak solution is bounded, satisfies the gradient estimate \eqref{grad} and, if $g \geq 0$, satisfies the Harnack inequality \eqref{harnack} and is  H\"{o}lder continuous, cf.\,\eqref{holder}. 
\end{theorem}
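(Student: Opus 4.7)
The strategy is to derive uniform (in $n$) estimates on the approximants $u_n$ that are strong enough to extract a convergent subsequence and pass to the limit in the weak formulation. First, apply Theorem \ref{thm1} to each $u_n$; since the constants are generic, this immediately supplies $n$-independent local $L^\infty$ bounds (via Moser iteration internal to Theorem \ref{thm1}), higher gradient integrability $\|\nabla u_n\|_{L^p(B_{R/2}(x))}\le C_0$ with some $p>2$, Harnack \eqref{harnack}, and Hölder \eqref{holder}. A uniform global bound $\|u_n\|_{L^\infty(\Omega)}\le C\|g\|_\infty$ follows by a De Giorgi truncation applied to $(u_n-\|g_n\|_\infty)_+$ and $(u_n+\|g_n\|_\infty)_-$, where the condition $\nu_+<2\sigma$ is exactly what absorbs the contribution of $({\rm div\,}b_n)_+$. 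A uniform energy bound $\|u_n\|_{W^{1,2}(\Omega)}\le C$ is then obtained by testing \eqref{d2} with $u_n-g_n\in W^{1,2}_0(\Omega)$, using \eqref{g_n} to control the terms involving $\nabla g_n$ and handling the drift through the Caccioppoli-type machinery of Proposition \ref{c_prop} combined with the form-boundedness of ${\rm div\,}b_n$.

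With these uniform bounds, extract a (not relabelled) subsequence with $u_n\rightharpoonup u$ weakly in $W^{1,2}_{\loc}(\Omega)$; by Rellich--Kondrachov and the uniform $L^\infty$ bound, $u_n\to u$ strongly in $L^q_{\loc}(\Omega)$ for every $q<\infty$ and, after passing to a further subsequence, pointwise a.e. The boundary condition $u-g\in W^{1,2}_0(\Omega)$ is preserved because $u_n-g_n\in W^{1,2}_0(\Omega)$ and the weak $W^{1,2}$ limit of a sequence in $W^{1,2}_0$ remains in $W^{1,2}_0$. Now pass to the limit in the weak formulation tested against $\varphi\in C_c^\infty(\Omega)$: for the diffusion term, $a_n\to a$ in $[L^1_{\loc}]^{d\times d}$ together with the uniform bound $a_n\le \xi I$ yields $a_n^\top\nabla\varphi\to a^\top\nabla\varphi$ strongly in $L^2(\Omega)$, which pairs against the weakly $L^2_{\loc}$-convergent $\nabla u_n$. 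For the drift term, write
$$
\langle b_n u_n,\nabla\varphi\rangle-\langle bu,\nabla\varphi\rangle=\langle(b_n-b)u_n,\nabla\varphi\rangle+\langle b(u_n-u),\nabla\varphi\rangle;
$$
the first summand vanishes by \eqref{b_n}, the uniform $L^\infty$ bound on $u_n$ and the compact support of $\varphi$, while the second vanishes by dominated convergence with the integrable majorant $C\|\nabla\varphi\|_\infty|b|\mathbf{1}_{\sprt\varphi}$. The divergence term is handled analogously using \eqref{div_n2}. This verifies both conditions of Definition \ref{weak_sol_def}, with (\textit{i}) following from the uniform $L^\infty$ bound.

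The claimed regularity of $u$ is inherited from Theorem \ref{thm1} applied at the level of $u_n$: the gradient bound \eqref{grad} passes by weak $L^p_{\loc}$ lower semicontinuity, Hölder continuity \eqref{holder} and the Harnack inequality \eqref{harnack} pass through uniform convergence on compacts via Arzelà--Ascoli. The main obstacle is expected to sit at the very first step, namely producing the uniform $L^\infty$ bound and the uniform global energy estimate under only multiplicative form-boundedness of $b_n$. Indeed, because $|b|$ is merely in $L^1_{\loc}$, convergence of the drift term crucially relies on the uniform $L^\infty$ bound on $u_n$, whose proof must bypass the classical quadratic inequality and instead rest on the Caccioppoli iterations of Proposition \ref{c_prop}. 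A secondary technical point is that Proposition \ref{c_prop} is interior in nature, so obtaining energy control up to $\partial\Omega$ requires a boundary/interior decomposition that exploits the smoothness of $g_n$ and the Lipschitz regularity of $\Omega$, but this step should be routine once the interior Caccioppoli inequality is in hand.
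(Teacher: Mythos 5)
Your overall scheme (uniform sup bound, uniform energy bound by testing with $u_n-g_n$, weak compactness, limit passage using the $L^1_{\loc}$ convergence of the coefficients plus dominated convergence and the uniform $L^\infty$ bound, inheritance of \eqref{grad}--\eqref{holder} from Theorem \ref{thm1}) is the same as the paper's, and your treatment of the diffusion term is in fact a little simpler than the paper's (strong $L^2$ convergence of $a_n^{\top}\nabla\varphi$ paired against weak $L^2_{\loc}$ convergence of $\nabla u_n$, instead of the $p>2$ gradient bound paired with $a_n\to a$ in $L^{p'}(\supp\varphi)$). The genuine gap sits exactly where you placed the ``main obstacle'': the uniform bound $\|u_n\|_{L^\infty(\Omega)}\le C\|g\|_\infty$. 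Your De Giorgi truncation with $w_k=(u_n-k)_+$, $k\ge\|g_n\|_\infty$, only yields $(\sigma-\tfrac{\nu_+}{2})\|\nabla w_k\|_{2}^2\le \tfrac{c_{\nu_+}}{2}\|w_k\|_2^2$, because in Theorem \ref{thm2} the constant $c_{\nu_+}$ in \eqref{nu} is \emph{not} assumed to vanish (that is assumed only in Theorem \ref{thm3}). A family of inequalities $\|\nabla w_k\|_2\le C\|w_k\|_2$ does not give boundedness by itself: via Sobolev it only forces $|\{u_n>k\}|\ge c_0>0$ at every level where $w_k\not\equiv 0$, so to conclude you need a uniform-in-$n$ global integral bound on $u_n$ to push the level sets below $c_0$ at a generic height. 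But your uniform energy estimate is derived \emph{after}, and uses, the $L^\infty$ bound (the terms $\tfrac{c_{\nu_+}}{2}\|v_n\|_2^2$ and $\alpha c_\delta\|v_n\|_2^2$ produced by \eqref{nu} and by $b_n\in\mathbf{MF}_\delta$ cannot be absorbed when $c_{\nu_+}$, $c_\delta$ are large). As ordered, the argument is circular. The paper avoids this entirely: each approximating problem \eqref{d2} has smooth bounded coefficients and no zeroth-order term, so the classical maximum principle gives $\|u_n\|_{L^\infty(\Omega)}\le\|g_n\|_\infty\le\|g\|_\infty$ at once; the energy bound \eqref{nabla_v} is then obtained by testing with $v_n=u_n-g_n$ and using $b_n\in\mathbf{MF}_\delta$ on $\langle|b_n|,v_n^2\rangle$ and $\langle|b_n|,|\nabla g_n|^2\rangle$ (this is where $g\in W^{2,2}$ enters, and it is the difficulty the paper actually flags: $f_n=-\nabla\cdot a_n\cdot\nabla g_n+b_n\cdot\nabla g_n$ is in general not uniformly bounded in $W^{-1,2}$).

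Two smaller corrections. First, the global energy estimate needs neither the ``Caccioppoli-type machinery'' of Proposition \ref{c_prop} nor any interior/boundary decomposition: since $v_n\in W_0^{1,2}(\Omega)$, one tests globally with $v_n$ and no cutoff ever appears, so the difficulty specific to $\mathbf{MF}_\delta$ (gradient of solution against gradient of cutoff) does not arise here; Proposition \ref{c_prop}, through Theorem \ref{thm1}, is needed only for the interior regularity assertions (and, in the paper, for the limit passage in the $a_n$-term, which your strong-times-weak argument makes unnecessary). Second, for the Harnack and H\"{o}lder claims when $g\ge 0$ one should choose (or note that one may choose) $g_n\ge 0$, so that $u_n\ge 0$ by the maximum principle before applying \eqref{harnack}, \eqref{holder} to $u_n$ and passing to the limit; with that adjustment your Arzel\`{a}--Ascoli/lower semicontinuity argument for transferring \eqref{grad}--\eqref{holder} to $u$ is fine.
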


The proof of convergence/existence part of Theorem \ref{thm2} requires some care, since the moment one puts problem \eqref{d0} for $a=a_n$, $b=b_n$, $g=g_n$ in the form
\begin{equation*}
\left\{
\begin{array}{l}
(-\nabla \cdot a \cdot \nabla + b \cdot \nabla)v=-f \\
v \in W_0^{1,2}(\Omega),
\end{array}
\right.
\end{equation*}
(so  $u=v+g$), then
the right-hand side $f \equiv f_n:=-\nabla \cdot a \cdot \nabla g + b \cdot \nabla g$ is, in general,  not bounded in $ W^{-1,2}$ uniformly in $n$ (if it is uniformly bounded in $W^{-1,2}$, then the proof is straightforward).

\bigskip

One now arrives at the question: does a weak solution to \eqref{d0} constructed in Theorem \ref{thm2} (an ``approximation solution'') depend on the choice of the approximation procedure ($a_n$, $b_n$, $g_n$)? 
In the next theorem, which is essentially a consequence of Theorem \ref{thm1} and Gehring's Lemma, a ``generic constant'' can also depend on $\|g\|_{W^{2,2}(\Omega)}$.

\begin{theorem}
\label{thm3}
In the assumptions of Theorem \ref{thm2}, let also
$$g \in W^{2,2+\epsilon_1}_\loc \cap W^{1,\frac{1+\epsilon}{\epsilon}}_{\loc} \;\; \text{for some }0<\epsilon_1,\epsilon<1, \quad g_n \rightarrow g\;\;\text{ in }  W^{2,2+\epsilon_1}_\loc \cap W^{1,\frac{1+\epsilon}{\epsilon}}_{\loc},$$ 
and, in the assumption \eqref{nu} on $({\rm div\,}b)_+$, let $c_{\nu_+}=0$. 

There exists a generic  $p \in [1+\epsilon,2[$ such that if, in addition to \eqref{b_n}, $$|b| \in L^p_{\loc}, \quad b_n \rightarrow b\;\;\text{ in } [L^p_\loc]^d,$$ then the approximation solution to Dirichlet problem \eqref{d0} constructed in Theorem \ref{thm2} does not depend on the choice of $(a_n, b_n, g_n)$, and is in this sense unique.
\end{theorem}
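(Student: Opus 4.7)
The plan is to compare any two approximation solutions $u^{(1)}, u^{(2)}$ of \eqref{d0} by subtracting them and testing the resulting homogeneous equation with the difference itself; the admissibility of this test function is precisely where Theorem \ref{thm1}, Gehring's Lemma, and the hypothesis $|b|\in L^p_{\loc}$ will enter. First I will establish higher integrability of the gradient: applied to the smooth approximating problems \eqref{d2}, Theorem \ref{thm1} gives a uniform-in-$n$ interior bound $\|\nabla u_n^{(i)}\|_{L^{p_0}(B_{R/2}(x))}\leq C_0$ with some generic $p_0>2$ for every $B_{2R}(x)\subset\Omega$ (Proposition \ref{c_prop} furnishes the reverse H\"older inequality to which Gehring's Lemma applies, fixing $p_0$ as a generic constant). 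Passing to the weak $W^{1,2}_{\loc}$-limit from Theorem \ref{thm2} (lower semicontinuous in these norms combined with the uniform $L^{p_0}_{\loc}$-bound), each approximation solution satisfies $\nabla u^{(i)}\in L^{p_0}_{\loc}(\Omega)$. I then take the generic exponent in the statement to be $p:=p_0/(p_0-1)\in(1,2)$, so that $p'=p_0$; this lies in $[1+\epsilon,2)$ provided $\epsilon$ stays below the generic threshold $1/(p_0-1)$.

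Setting $w:=u^{(1)}-u^{(2)}$, both summands are bounded and have trace $g$ on $\partial\Omega$ by Theorem \ref{thm2}, so $w\in W_0^{1,2}(\Omega)\cap L^\infty(\Omega)$ with $\nabla w\in L^{p_0}_{\loc}(\Omega)$, and by linearity of Definition \ref{weak_sol_def},
\[
\langle a\nabla w,\nabla\varphi\rangle-\langle bw,\nabla\varphi\rangle-\langle({\rm div}\,b)w,\varphi\rangle=0\qquad\forall\varphi\in C_c^\infty(\Omega).
\]
H\"older's inequality with $w\in L^\infty$, $|b|\in L^p_{\loc}$ and $\nabla\varphi\in L^{p'}_{\loc}=L^{p_0}_{\loc}$, together with form-boundedness of $({\rm div}\,b)_\pm$, will make each term continuous in $\varphi\in W_0^{1,2}(\Omega)\cap L^\infty(\Omega)$ whose gradient lies in $L^{p_0}_{\loc}$. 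I will then use cutoffs $\eta_k\in C_c^\infty(\Omega)$, $\eta_k\uparrow 1$, to approximate $\varphi=w$ by $\eta_k w$, with the boundary contributions coming from $\nabla\eta_k$ vanishing as $k\to\infty$ by the $W_0^{1,2}$-convergence $\eta_k w\to w$ and Hardy's inequality, thereby legitimizing the substitution $\varphi=w$.

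Once $\varphi=w$ is admissible, integration by parts produces the divergence-form cancellation $\langle bw,\nabla w\rangle=\tfrac{1}{2}\langle b,\nabla w^2\rangle=-\tfrac{1}{2}\langle({\rm div}\,b),w^2\rangle$, which collapses the tested identity to
\[
\langle a\nabla w,\nabla w\rangle=\tfrac{1}{2}\langle({\rm div}\,b),w^2\rangle\leq\tfrac{1}{2}\langle({\rm div}\,b)_+,w^2\rangle\leq\tfrac{\nu_+}{2}\|\nabla w\|_2^2,
\]
the last step being form-boundedness \eqref{nu} with the assumed $c_{\nu_+}=0$. Combined with $\sigma\|\nabla w\|_2^2\leq\langle a\nabla w,\nabla w\rangle$ and the strict inequality $\nu_+<2\sigma$ from \eqref{nu}, this forces $\nabla w\equiv 0$, so $w=0$ in $W_0^{1,2}(\Omega)$, proving uniqueness of the approximation solution. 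The hard part will be the test-function extension in the previous paragraph: without the $L^{p_0}_{\loc}$-gradient bound (Theorem \ref{thm1} plus Gehring) and the hypothesis $|b|\in L^p_{\loc}$ with $p\geq p_0/(p_0-1)$, even $\langle bw,\nabla w\rangle$ is not a priori defined, and the vanishing of $c_{\nu_+}$ is precisely what removes the residual $L^2$-term that would otherwise obstruct the final absorption.
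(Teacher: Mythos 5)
Your route is genuinely different from the paper's, and it has a gap at its central step. The paper never tests the limiting (singular-coefficient) equation: it shows that the solutions $v_n$ of the smooth approximating problems form a Cauchy sequence in $L^2(\Omega)$, subtracting the equations for $v_n,v_m$ and pairing the convergence $b_n-b_m\to 0$ in $[L^p_{\loc}]^d$ with a \emph{uniform, up-to-the-boundary} bound $\|\nabla v_m\|_{p',\Omega}\le C$, $p'>2$ (estimate \eqref{grad2}), obtained from the inhomogeneous reverse H\"{o}lder inequality \eqref{inv_holder} plus Gehring's Lemma. You instead compare two limit solutions $u^{(1)},u^{(2)}$ directly, test the homogeneous equation for $w=u^{(1)}-u^{(2)}$ with $w$ itself, and invoke the cancellation $\langle bw,\nabla w\rangle=-\tfrac12\langle({\rm div\,}b),w^2\rangle$. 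That energy identity for the difference of two weak solutions is precisely what fails in general for $|b|\in L^p$, $p<2$ (this is the mechanism behind Zhikov's non-uniqueness examples cited in the paper), so it is the one step that cannot be waved through, and your justification of it does not close. Concretely: (i) the gradient higher integrability you import from Theorem \ref{thm1} (estimate \eqref{grad}) is an \emph{interior} estimate for the homogeneous equation, so it gives $\nabla u^{(i)}\in L^{p_0}_{\loc}(\Omega)$ only away from $\partial\Omega$; Proposition \ref{c_prop} by itself does not furnish the reverse H\"{o}lder inequality needed up to the boundary or with the data terms in $g$ -- the paper has to prove \eqref{inv_holder} separately, extending $v$ by zero and treating balls that meet $\partial\Omega$; (ii) in your cutoff argument the drift boundary term $\langle bw,\,w\nabla\eta_k\rangle$ is not controlled: with $|b|\in L^p(\Omega)$, $p<2$, $w\in L^\infty$, Hardy's inequality only gives $w/{\rm dist}(\cdot,\partial\Omega)\in L^2$, and H\"{o}lder would require $p\ge 2$; using instead the $\mathbf{MF}_\delta$ bound as in the Caccioppoli proof produces the factor $\|w\,\nabla\sqrt{|\nabla\eta_k|}\|_2\sim\|w\,{\rm dist}^{-3/2}\|_2$, which Hardy does not control either. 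So the passage from $\varphi\in C_c^\infty(\Omega)$ to $\varphi=w$ is not established, and without it the final absorption $\sigma\|\nabla w\|_2^2\le\tfrac{\nu_+}{2}\|\nabla w\|_2^2$ has no starting point.

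A secondary, fixable slip: you constrain $\epsilon$ by a generic threshold ($\epsilon<1/(p_0-1)$), whereas the theorem allows any $0<\epsilon<1$; one should simply take $p:=\max\{1+\epsilon,\;p_0/(p_0-1)\}\in[1+\epsilon,2[$, since convergence in $L^{p}_{\loc}$ on bounded sets implies convergence in the smaller exponent actually needed against $\nabla v_m\in L^{p_0}$. But the essential missing ingredients are the global (up to $\partial\Omega$) uniform $L^{p'}$ gradient bound via \eqref{inv_holder}--Gehring, and an argument replacing the unjustified energy identity for the difference of two limit solutions -- which is exactly what the paper's Cauchy-sequence strategy, carried out entirely at the level of the smooth approximants, is designed to avoid.
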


Zhikov \cite{Zh} investigated approximation uniqueness for Dirichlet problem
\begin{equation*}
\left\{
\begin{array}{l}
(-\Delta + b \cdot \nabla)v=-f, \quad f \in W^{-1,2} \\
v \in W_0^{1,2}(\Omega)
\end{array}
\right.
\end{equation*}
with divergence-free $b$, singling out two classes of the approximation uniqueness results: 

1) $b=\nabla B$ with $B$ is of bounded mean oscillation on $\Omega$ or $\lim_{q \rightarrow \infty}q^{-1}\|B\|_{L^q(\Omega)}=0$ (the question what properties of $b$ ensure the existence of such $B$, satisfying some integrability assumptions, is non-trivial), 

2) $b \in [L^2(\Omega)]^d$ or $\lim_{\varepsilon \downarrow 0}\varepsilon\|b\|_{L^{2-\varepsilon}(\Omega)}=0$.

Theorem \ref{thm3} thus shows that one can step away from $p=2$ in the condition $|b| \in L^p(\Omega)$ by a fixed constant, provided that $b$ is multiplicatively form-bounded. One can now justifiably pose the question if one can remove the additional to $b \in \mathbf{MF}$ assumption ``$|b| \in L^p_{\loc}$ for a certain $1<p<2$'' completely.

\bigskip

\section{Smooth approximation of coefficients}

\label{approx_sect}

Let measurable
 $a \in (H_{\sigma,\xi})$, $b \in \mathbf{MF}$, assume that ${\rm div\,}b \in L^1_{\loc}$ satisfies \eqref{nu}, \eqref{nu2}, and boundary data  $g$ satisfies \eqref{g}. We discuss the question of constructing $a_n$, $b_n$, $g_n$ satisfying the assumptions \eqref{b_n}-\eqref{a_n} before Theorem \ref{thm2}.

It is not difficult to construct a bounded smooth approximation of matrix $a$ and $g$. 
The question of how to approximate $b$ by bounded smooth vector fields is more subtle since we need to control both the multiplicative form-bound and the form-bound of the (positive/negative part of the) divergence of the approximating vector fields $b_n$. We can put e.g.
$$
b_n:=\gamma_{\varepsilon_n} \ast b \quad \text{ for } \varepsilon_n \downarrow 0,
$$
where $\gamma_\varepsilon(y):=\varepsilon^{-d}\gamma(y/\varepsilon)$ is the Friedrichs mollifier, $\gamma(y):=ce^{-\frac{1}{|y|^2-1}}$ for $|y|<1$ and is zero otherwise, with $c$ adjusted to $\langle \gamma \rangle=1$, cf.\,\cite{KiS1,KiS3}. Indeed, the following is true for every $n \geq 1$:

1) $b_n \in L^\infty \cap C^\infty$. The second inclusion follows from $b \in [L^1_{\loc}]^d$ and the standard properties of Friedrichs mollifiers. To see the first inclusion, fix $x \in \mathbb R^d$ and estimate
\begin{align*}
|E_\varepsilon b(x)| & =|\langle b(\cdot)\sqrt{\gamma_\varepsilon(x - \cdot)},\sqrt{\gamma_\varepsilon(x - \cdot)}\rangle | \\
& \leq \delta \langle \big|\nabla \sqrt{\gamma_\varepsilon(x - \cdot)}\big|^2 \rangle^{\frac{1}{2}}   + c_\delta  \\
& (\text{we use $\big\langle \big|\nabla\sqrt{\gamma_\varepsilon(x - \cdot)}\big|^2\big\rangle=C^2\varepsilon^{-2}$}) \\
& \leq C\varepsilon^{-1}  + c_\delta
\end{align*}
for appropriate $C>0$ (clearly, independent of $x$). 

\smallskip

2) $b_n \in \mathbf{MF}_\delta$ with the same $c_\delta$ (thus, independent of $n$). 
Let $\varphi \in C_c^\infty$. First, let us note that for $\varphi_{m}:=\varphi+\frac{e^{-|x|^2}}{m}$ we have
\begin{align*}
\|\nabla\sqrt{\gamma_\varepsilon \ast |\varphi_m|^2}\|_2 & =\big\|\frac{\gamma_\varepsilon \ast(|\varphi_m||\nabla|\varphi_m|)}{\sqrt{\gamma_\varepsilon \ast|\varphi_m|^2}}\big\|_2 \\
& \leq \|\sqrt{\gamma_\varepsilon \ast|\nabla |\varphi_m||^2}\|_2=\|\gamma_\varepsilon \ast|\nabla |\varphi_m||^2\|_1^\frac{1}{2} \leq\|\nabla|\varphi_m|\|_2\leq \|\nabla \varphi_m\|_2
\end{align*}
(we need term $\frac{e^{-|x|^2}}{m}$ to make sure that $\gamma_\varepsilon \ast|\varphi_m|^2>0$ everywhere). Now, taking $m \rightarrow \infty$, we obtain
$$
\|\nabla\sqrt{\gamma_\varepsilon \ast |\varphi|^2}\|_2 \leq \|\nabla \varphi\|_2.
$$
By $b \in \mathbf{MF}_\delta$, we have for all $\varphi \in C_c^\infty$,
\begin{align*}
 \langle |\gamma_\varepsilon \ast b| \varphi,\varphi\rangle & \leq \langle |b|,\gamma_\varepsilon \ast |\varphi|^2\rangle \\
& \leq \delta \|\nabla \sqrt{\gamma_\varepsilon \ast |\varphi|^2}\|_2\|\sqrt{\gamma_\varepsilon \ast|\varphi|^2}\|_2 + c_\delta\|\sqrt{\gamma_\varepsilon \ast|\varphi|^2}\|_2 \\
& \leq \delta \|\nabla \varphi\|_2\|\varphi\|_2 + c_\delta\|\varphi\|_2^2,
\end{align*}
as needed.

\smallskip

3) We put ${\rm div\,}b_{n,\pm}:=E_{\varepsilon_n}({\rm div\,}b)_\pm \geq 0$, i.e.\,we mollify $({\rm div\,}b)_+:={\rm div\,}b \vee 0$ and $({\rm div\,}b)_-:=-({\rm div\,}b \wedge 0)$.
Then, clearly,
$${\rm div\,}b_n={\rm div\,}b_{n,+} - {\rm div\,}b_{n,-}.$$ 
The proof of the boundedness, smoothness and form-boundedness \eqref{nu}, \eqref{nu2} of ${\rm div\,}b_{n,+}$, ${\rm div\,}b_{n,-}$ follow the argument in 1), 2).

The convergence $b_n \rightarrow b$ in $[L^1_{\loc}]^d$, $({\rm div\,}b_n)_\pm \rightarrow {\rm div\,}b_\pm$ in $L^1_{\loc}$ follows from the properties of Friedrichs mollifiers.

Finally, the extra local summability/Sobolev regularity assumptions on $b$, $g$ in Theorem \ref{thm3} transfer to $b_n$, $g_n$ without any problems, by the properties of Friedrichs mollifiers.

\bigskip

\section{Caccioppoli's inequality}

\begin{proposition}
\label{c_prop}
Let $a$, $b$ satisfy the assumptions of Theorem \ref{thm1}. 
Let $u$ be a solution to equation \eqref{eq2} in a bounded Lipschitz domain $\Omega \subset \mathbb R^d$.
Set $v:=(u-c)_+$, $c \in \mathbb R$. Then, for all $x \in \Omega$ and all $0<r<R$, where $R$ is bounded from above by some $R_0 \leq 1$ such that $B_{R_0}(x) \subset\subset \Omega$,
we have
\begin{align}
\label{c_ineq} 
\langle |\nabla v|^2 \mathbf{1}_{B_{r}(x)}\rangle \leq K(R-r)^{-2} \langle v^2 \mathbf{1}_{B_{R}(x)}\rangle
\end{align}
for a generic constant $K$.
\end{proposition}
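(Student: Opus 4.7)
The strategy is to derive an iteration inequality of the form
\[
I(\rho_1) \le \theta\, I(\rho_2) + \frac{C}{(\rho_2-\rho_1)^2}\, V, \qquad r\le \rho_1<\rho_2\le R,
\]
with a generic constant $\theta<1$, where $I(\rho):=\langle|\nabla v|^2\mathbf 1_{B_\rho(x)}\rangle$ and $V:=\langle v^2\mathbf 1_{B_R(x)}\rangle$, and then to close it by a standard Giaquinta-type iteration lemma on radii $\rho_k=r+(R-r)(1-\tau^k)$ with $\tau$ chosen so that $\theta\tau^{-2}<1$ (the term $\theta^n I(\rho_n)$ vanishes in the limit because $v\in W^{1,2}(B_R(x))$ by the smoothness assumed in Theorem \ref{thm1}).

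Fix radii $\rho_1<\rho_2$ and a cutoff $\eta\in C_c^\infty(B_{\rho_2}(x))$, $0\le\eta\le1$, $\eta=1$ on $B_{\rho_1}(x)$, chosen (e.g.\,$\eta=\eta_0^4$ for a standard Lipschitz $\eta_0$) so that both $|\nabla\eta|^2/\eta\le c(\rho_2-\rho_1)^{-2}$ and $|\nabla\sqrt{|\nabla\eta|}|^2\le c(\rho_2-\rho_1)^{-3}$. Testing \eqref{eq2} against $v\eta$, using $\nabla u=\nabla v$ on $\{v>0\}$ and $v\nabla v=\tfrac12\nabla v^2$, integration by parts on the drift term gives
\begin{align*}
\sigma\langle|\nabla v|^2\eta\rangle &\le |\langle a\nabla v,v\nabla\eta\rangle| + \tfrac12\langle v^2\eta,({\rm div\,}b)_+\rangle - \tfrac12\langle v^2\eta,({\rm div\,}b)_-\rangle + \tfrac12|\langle b\cdot\nabla\eta,v^2\rangle|.
\end{align*}
The term with $({\rm div\,}b)_-$ has favorable sign and is dropped. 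The $a$-term is handled by Cauchy-Schwarz and Young's inequality, producing $\tfrac{\sigma}{4}\langle|\nabla v|^2\eta\rangle + C(\rho_2-\rho_1)^{-2}V$. For the $({\rm div\,}b)_+$ term I apply \eqref{nu} to $\varphi=v\sqrt\eta$, using $\|\nabla(v\sqrt\eta)\|_2^2\le(1+\mu)\|\sqrt\eta\nabla v\|_2^2+(1+\mu^{-1})\|v\nabla\sqrt\eta\|_2^2$; this produces a coefficient $\tfrac{\nu_+(1+\mu)}{2}$ in front of $\langle|\nabla v|^2\eta\rangle$ which, since $\nu_+<2\sigma$, can be taken strictly below $\sigma$ by choosing $\mu$ small. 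After absorption one arrives at
\[
\sigma'\langle|\nabla v|^2\eta\rangle \le \frac{C}{(\rho_2-\rho_1)^2}V + \tfrac12|\langle b\cdot\nabla\eta,v^2\rangle|, \qquad \sigma'>0 \text{ generic.}
\]

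Now comes the decisive step: the last term is estimated via $b\in\mathbf{MF}_\delta$ applied to $\varphi:=v\sqrt{|\nabla\eta|}$, mirroring \eqref{m_ineq}, yielding
\[
\tfrac12|\langle b\cdot\nabla\eta,v^2\rangle| \le \tfrac{\delta}{2}\bigl(\|\sqrt{|\nabla\eta|}\nabla v\|_2 + \|v\nabla\sqrt{|\nabla\eta|}\|_2\bigr)\|\varphi\|_2 + \tfrac{c_\delta}{2}\|\varphi\|_2^2,
\]
and the cutoff bounds give $\|\varphi\|_2^2\le c(\rho_2-\rho_1)^{-1}V$, $\|\sqrt{|\nabla\eta|}\nabla v\|_2^2\le c(\rho_2-\rho_1)^{-1}I(\rho_2)$, and $\|v\nabla\sqrt{|\nabla\eta|}\|_2^2\le c(\rho_2-\rho_1)^{-3}V$. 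Multiplying these out produces the mixed term
\[
\frac{c\delta}{\rho_2-\rho_1}\,I(\rho_2)^{1/2}V^{1/2},
\]
plus harmless pure-$V$ contributions of order $(\rho_2-\rho_1)^{-2}V$. The \emph{main obstacle} is precisely that this mixed term involves $I(\rho_2)$, not $I(\rho_1)$, so it cannot be directly absorbed. The Caccioppoli-iteration idea is to split it by Young with a parameter proportional to $\rho_2-\rho_1$:
\[
\frac{c\delta}{\rho_2-\rho_1}\,I(\rho_2)^{1/2}V^{1/2} \le c\delta\kappa\, I(\rho_2) + \frac{c\delta}{4\kappa(\rho_2-\rho_1)^2}\,V,
\]
so that the coefficient in front of $I(\rho_2)$ becomes the generic constant $c\delta\kappa$, which can be made smaller than any fixed $\theta\in(0,1)$ by choosing $\kappa$ small (at the cost of a larger $V$-coefficient). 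Substituting back, one obtains the sought iteration inequality, and Giaquinta's lemma concludes with $K$ depending only on generic data.
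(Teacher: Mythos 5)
Your derivation of the pre-Caccioppoli estimate tracks the paper's Step 1 closely: a cutoff satisfying the three bounds \eqref{e1}--\eqref{e3}, $b\in\mathbf{MF}_\delta$ applied to $\varphi=v\sqrt{|\nabla\eta|}$, absorption of the $(\mathrm{div}\,b)_+$ contribution via \eqref{nu} (this is where $\nu_+<2\sigma$ enters), and the resulting mixed term of order $(\rho_2-\rho_1)^{-1}\|\nabla v\|_{L^2(B_{\rho_2})}\|v\|_{L^2(B_R)}$ that cannot be transferred to the left side because it lives on the larger ball. Where you genuinely depart from the paper is in closing the iteration. You split the mixed term by Young's inequality with a fixed small generic parameter $\kappa$, which makes the coefficient on $I(\rho_2)$ a constant $\theta<1$ independent of the radii, and then invoke Giaquinta's interpolation lemma \cite[Lemma 6.1]{G} (a tool the paper itself uses later, in Proposition \ref{mp2}). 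The paper instead divides through by $\|v\mathbf{1}_{B_R}\|_2^2$, specializes to the dyadic radii $r_1=R-(R-r)2^{1-n}$, $r_2=R-(R-r)2^{-n}$, normalizes the resulting recursion to $y_n^2\le 1+(R-r)^2+y_{n+1}$, and closes it by the elementary observation that $\beta:=\sup_n y_n$ is finite (by the a priori $W^{1,2}$ regularity of $u$, the same hypothesis your Giaquinta argument needs for boundedness of $I(\cdot)$) and satisfies the quadratic $\beta^2\le 1+(R-r)^2+\beta$, hence is generically bounded. Both closings are correct and yield the same conclusion; yours is shorter and reuses a standard lemma, while the paper's is fully self-contained and exhibits the ``Caccioppoli iteration'' pattern explicitly so that it can be redeployed, in a slightly different guise, in the proof of Proposition \ref{pp_prop}. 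No gaps.
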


\begin{proof} 
\textit{Step 1 (a pre-Caccioppoli's inequality).} Without loss of generality, $x=0$. We fix $[0,1]$-valued smooth cut-off functions $\{\eta=\eta_{r_1,r_2}\}_{0<r_1<r_2<R}$ on $\mathbb R^d$ satisfying
$$
\eta=\left\{
\begin{array}{ll}1 & \text{ in } B_{r_1}, \\
0 & \text{ in } \mathbb R^d - \bar{B}_{r_2},
\end{array}
\right.
$$
and
\begin{equation}
\label{e1}
\frac{|\nabla \eta|^2}{\eta} \leq \frac{c}{(r_2-r_1)^2}\mathbf{1}_{B_{r_2}},
\end{equation}
\begin{equation}
\label{e2}
\sqrt{|\nabla \eta|} \leq \frac{c}{\sqrt{r_2-r_1}}\mathbf{1}_{B_{r_2}},
\end{equation}
\begin{equation}
\label{e3}
|\nabla \sqrt{|\nabla \eta|}| \leq \frac{c}{(r_2-r_1)^{\frac{3}{2}}}\mathbf{1}_{B_{r_2}}
\end{equation}
for some constant $c$. For instance, one can take, for $r_1 \leq |y| \leq r_2$, 
$$\eta(y):=1-\int_1^{1+\frac{|y|-r_1}{r_2-r_1}}\varphi(s)ds, \quad \text{where } \varphi(s):=Ce^{-\frac{1}{\frac{1}{4}-(s-\frac{3}{2})^2}}, \quad \supp \varphi=[1,2],$$
with constant $C$ adjusted to $\int_1^2 \varphi(s)ds=1$.

We multiply equation \eqref{eq2} by $v\eta$ and integrate to obtain
\begin{align*}
\langle a \cdot \nabla v,(\nabla v)\eta\rangle +  \langle a \cdot \nabla v,v\nabla \eta \rangle  + \langle b \cdot \nabla v,v\eta\rangle = 0,
\end{align*}
so
\begin{align*}
\sigma \langle |\nabla v|^2\eta\rangle +  \langle a \cdot \nabla v,v\nabla \eta \rangle  + \langle b \cdot \nabla v,v\eta\rangle \leq 0,
\end{align*}
Applying quadratic inequality in the second term, we obtain
\begin{align}
(\sigma-\epsilon)\langle |\nabla v|^2\eta\rangle & \leq \frac{1}{4\sigma^2\epsilon}\big\langle v^2\frac{|\nabla \eta|^2}{\eta} \big\rangle  - \langle b \cdot \nabla v,v\eta\rangle \quad (\epsilon>0) \notag \\
& \leq \frac{1}{4 \sigma^2 \epsilon}\big\langle v^2\frac{|\nabla \eta|^2}{\eta} \big\rangle  + \frac{1}{2}\langle bv,v\nabla \eta\rangle + \frac{1}{2}\langle {\rm div\,}b,v^2\eta\rangle \notag \\
&=:I_1 + I_2 + I_3. \label{i1} 
\end{align}
By \eqref{e1},
$$
I_1  \leq \frac{c}{4\sigma^2\epsilon (r_2-r_1)^2}\| v \mathbf{1}_{B_{r_2}}\|_2^2.
$$
Regarding $I_2$, we have
by $b \in \mathbf{MF}_\delta$,
\begin{align*}
2I_2 \leq \langle |b|v,v|\nabla \eta|\rangle & \leq \delta \|\nabla (v\sqrt{|\nabla \eta|})\|_2\|v\sqrt{|\nabla \eta|}\|_2 + c_\delta  \|v\sqrt{|\nabla \eta|}\|^2_2 \\
& \leq \delta \bigg(\|(\nabla v)\sqrt{|\nabla \eta|}\|_2 + \|v \nabla \sqrt{|\nabla \eta|}\|_2 \bigg) \, \|v\sqrt{|\nabla \eta|}\|_2 + c_\delta \|v\sqrt{|\nabla \eta|}\|^2_2.
\end{align*}
Hence, using \eqref{e2}, \eqref{e3}, we obtain 
\begin{align*}
2I_2  & \leq \delta c \bigg(\frac{1}{\sqrt{r_2-r_1}}\|(\nabla v)\mathbf{1}_{B_{r_2}}\|_2 + \frac{1}{(r_2-r_1)^{\frac{3}{2}}}\|v\mathbf{1}_{B_{r_2}}\|_2 \bigg)\, \frac{1}{\sqrt{r_2-r_1}}\|v\mathbf{1}_{B_{r_2}}\|_2 \\
&  + \frac{c_\delta c}{r_2-r_1}\|v\mathbf{1}_{B_{r_2}}\|_2^2.
\end{align*}
Thus, since $r_2-r_1<1$,
\begin{align*}
I_2  & \leq \frac{C_1}{r_2-r_1}\|(\nabla v)\mathbf{1}_{B_{r_2}}\|_2\| v\mathbf{1}_{B_{r_2}}\|_2\\
& + C_1 \biggl(1+\frac{1}{(r_2-r_1)^2} \bigg)\|v\mathbf{1}_{B_{r_2}}\|^2_2
\end{align*}
for appropriate constant $C_1$. 
Finally, recalling that ${\rm div\,}b={\rm div\,}b_+ - {\rm div\,}b_-$ for bounded smooth ${\rm div\,}b_\pm \geq 0$ that satisfy \eqref{nu}, \eqref{nu2}, we have by \eqref{nu}
\begin{align*}
I_3 & \leq \frac{1}{2}\langle {\rm div\,}b_+,v^2\eta\rangle \\
& \leq \frac{\nu_+}{2} \bigg( \langle |\nabla v|^2\eta \rangle + 4^{-1}\langle v^2 \frac{|\nabla \eta|^2}{\eta}\rangle \bigg) + \frac{c_{\nu_+}}{2} \langle v^2 \eta\rangle \\
& \leq \frac{\nu_+}{2} \langle |\nabla v|^2\eta \rangle + \frac{c_1}{(r_2-r_1)^2}\langle v^2\mathbf{1}_{B_{r_2}}\rangle, \quad c_1:=4^{-1}c\nu_+ + \frac{c_{\nu_+}}{2}.
\end{align*}

Substituting the above estimates on $I_1$, $I_2$ and $I_3$ in \eqref{i1}, selecting $\epsilon$ sufficiently small and using our assumption $\nu_+<2\sigma$, we obtain
\begin{align*}
\langle |\nabla v|^2\eta\rangle & \leq \frac{C_1}{r_2-r_1}\|(\nabla v)\mathbf{1}_{B_{r_2}}\|_2 \| v\mathbf{1}_{B_{r_2}}\|_2  \\
& + C_2 \biggl(1+\frac{1}{(r_2-r_1)^2} \bigg)\|v\mathbf{1}_{B_{r_2}}\|^2_2.
\end{align*}
Hence
\begin{align}
 \langle |\nabla v|^2\mathbf{1}_{B_{r_1}}\rangle & \leq \frac{C_1}{r_2-r_1}\|(\nabla v)\mathbf{1}_{B_{r_2}}\|_2\| v\mathbf{1}_{B_{R}}\|_2 \notag \\
& + C_2 \biggl(1+\frac{1}{(r_2-r_1)^2} \bigg)\|v\mathbf{1}_{B_{R}}\|^2_2. \label{proto_0}
\end{align}
We can divide \eqref{proto_0} by $\|v\mathbf{1}_{B_{R}}\|^2_2$:
\begin{align}
\frac{\|(\nabla v)\mathbf{1}_{B_{r_1}}\|_2^2}{\|v\mathbf{1}_{B_{R}}\|^2_2}  \leq \frac{C_1}{r_2-r_1}\frac{\|(\nabla v)\mathbf{1}_{B_{r_2}}\|_2}{\|v\mathbf{1}_{B_{R}}\|_2} + C_2 \biggl(1+\frac{1}{(r_2-r_1)^2} \bigg). \label{proto_cacc}
\end{align}
This is a pre-Cacciopolli inequality that we will now iterate.

\medskip

\textit{Step 2 (Caccioppoli's iterations).} Fix $r$ as in the formulation of the theorem (so $0<r<R$) and put in \eqref{proto_cacc}
$$
r_1:=R-\frac{R-r}{2^{n-1}}, \quad r_2:=R-\frac{R-r}{2^{n}}, \quad n=1,2,\dots
$$
so $r_2-r_1=\frac{R-r}{2^{n}}$. Then, denoting the LHS of \eqref{proto_cacc} by
$$
a^2_n:= \frac{ \|(\nabla v)\mathbf{1}_{B_{R-\frac{R-r}{2^{n-1}}}}\|_2^2}{\|v\mathbf{1}_{B_{R}}\|^2_2},
$$
the inequality \eqref{proto_cacc} can be written as
$$
 a_n^2 \notag  \leq C(R-r)^{-1}2^{n} a_{n+1} + C^2 (R-r)^{-2}2^{2n} + C^2
$$
for appropriate $C$ independent of $n$.
We multiply the latter by $(R-r)^2$ and divide by $C^2 2^{2n}$ $( \geq 1)$. Then, setting $$y_n:=\frac{(R-r) a_n}{C^2 2^{n}},$$ we obtain
\begin{equation}
\label{i_ineq}
y_n^2 \leq 1+(R-r)^2 + y_{n+1}, \quad n=1,2,\dots
\end{equation}
Now we can iterate \eqref{i_ineq}, estimating all $y_n$ via nested square roots
$1+(R-r)^2+\sqrt{1+(R-r)^2 + \sqrt{\dots}}$.
Or we can simply note that $\beta:=\sup_{n \geq 1} y_n$ satisfies $$\beta^2 \leq 1+(R-r)^2 +\beta.$$
(Note that $\beta<\infty$ since all $a_n$'s are bounded by a (non-generic) constant
$
\|(\nabla v)\mathbf{1}_{B_{R}}\|_2 / \|v\mathbf{1}_{B_{R}}\|_2<\infty.
$)
Hence
$$
\beta \leq  \frac{1+\sqrt{1+4(1+(R-r)^2)}}{2}
$$
which implies $\beta^2 \leq 3+2(R-r)^2$ and thus $y_n^2 \leq 3+2(R-r)^2$, $n=1,2,\dots$.So, taking $n=1$, 
we arrive at 
$$
\|(\nabla v)\mathbf{1}_{B_{r}}\|_2^2/\|v\mathbf{1}_{B_{R}}\|^2_2 \leq K (R-r)^{-2},
$$
for a generic constant $K$ (we used $R \leq 1$ to get rid of the constant term in the RHS). This is the claimed inequality.
\end{proof}

\bigskip

\section{Proof of Theorem \ref{thm1}}

\subsection{The $\sup$ bound}

The main result of this section is Proposition \ref{mp2}. It will follow from the next result.

\begin{proposition} 
\label{mp}
Fix $1<\theta<\frac{d}{d-2}$. 
There exists a generic constant $K$ such that for all $0 <R \leq R_0\leq 1$, where $B_{R_0}(x) \subset\subset \Omega$,
\begin{align}
\sup_{B_{\frac{R}{2}}(x)}u_+  \leq K \biggl(\frac{1}{|B_R(x)|} \langle u_+^{2\theta} \mathbf{1}_{B_R(x)}\rangle\biggr)^{\frac{1}{2\theta}}. \label{mp_f}
\end{align}
\end{proposition}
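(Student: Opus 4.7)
The plan is to run the classical De~Giorgi iteration, which is now available to us because Proposition~\ref{c_prop} supplies a Caccioppoli estimate for $v=(u-c)_+$ for every $c\in\mathbb R$. Assume $x=0$ for notational simplicity. Fix $k>0$ (to be chosen at the end) and introduce nested levels and radii
$$
c_n := k(1-2^{-n}),\qquad r_n := \tfrac{R}{2}\bigl(1+2^{-n}\bigr), \qquad n=0,1,2,\dots,
$$
so that $c_n\uparrow k$ and $r_n\downarrow R/2$. Put $v_n:=(u-c_n)_+$ and
$$
A_n := \int_{B_{r_n}} v_n^{2\theta}\,dy.
$$
The goal is to force $A_n\to 0$; this will give $(u-k)_+\equiv 0$ on $B_{R/2}$, hence $\sup_{B_{R/2}} u_+\leq k$.

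The first step is a single-scale higher-integrability estimate for $v_{n+1}$. Apply Proposition~\ref{c_prop} to $v_{n+1}$ on the concentric pair $B_{(r_n+r_{n+1})/2}\subset B_{r_n}$, then combine with the Sobolev embedding $W^{1,2}\hookrightarrow L^{2^*}$ (where $2^*=\tfrac{2d}{d-2}$) through a standard cut-off between $B_{r_{n+1}}$ and $B_{(r_n+r_{n+1})/2}$. Since $r_n-r_{n+1}\sim R\,2^{-n}$, this yields
$$
\|v_{n+1}\|_{L^{2^*}(B_{r_{n+1}})} \leq C\,2^n R^{-1}\,\|v_{n+1}\|_{L^2(B_{r_n})}
\leq C\,2^n R^{-1}\,\|v_n\|_{L^2(B_{r_n})}.
$$
Because $2\theta<2^*$ (this is the role of the assumption $\theta<d/(d-2)$), Hölder's inequality gives
$$
A_{n+1} \leq \|v_{n+1}\|_{L^{2^*}(B_{r_{n+1}})}^{2\theta}\;\bigl|\{v_{n+1}>0\}\cap B_{r_{n+1}}\bigr|^{1-\frac{2\theta}{2^*}},
$$
and on $\{v_{n+1}>0\}=\{u>c_{n+1}\}$ one has $v_n\geq c_{n+1}-c_n=k\,2^{-(n+1)}$, so by Chebyshev
$$
\bigl|\{v_{n+1}>0\}\cap B_{r_{n+1}}\bigr| \leq (2^{n+1}/k)^{2\theta}\,A_n.
$$
A final Hölder bound $\|v_n\|_{L^2(B_{r_n})}^2\leq A_n^{1/\theta}|B_R|^{1-1/\theta}$ completes the ingredients.

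Assembling the four estimates produces a recursion of the form
$$
A_{n+1}\leq C_0^{\,n}\,R^{-2\theta}\,k^{-2\theta\mu}\,|B_R|^{\theta-1}\,A_n^{1+\mu},
\qquad \mu := 1-\tfrac{(d-2)\theta}{d}>0,
$$
to which the standard geometric convergence lemma applies: $A_n\to 0$ provided $A_0$ is smaller than an explicit threshold depending on $R$, $k$, $|B_R|$. Solving that threshold condition for $k$ and using $|B_R|\sim R^d$ shows that the minimal admissible value is
$$
k \leq K\Bigl(\tfrac{1}{|B_R|}\int_{B_R} u_+^{2\theta}\Bigr)^{1/(2\theta)}
$$
for a generic constant $K$, which is exactly the bound \eqref{mp_f}.

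The work is bookkeeping rather than analysis: the only point requiring care is verifying that the powers of $R$ and $|B_R|$ coming from Caccioppoli, Sobolev, Hölder, and the Chebyshev level-set estimate combine to produce precisely the averaged $L^{2\theta}$ norm on the right-hand side, i.e.\ $R^{-d}$ after solving for $k$. There is no genuinely new analytic difficulty, since the nontrivial input -- handling the multiplicatively form-bounded drift -- has already been fully absorbed into Proposition~\ref{c_prop}; once that energy inequality on superlevel sets $(u-c)_+$ is in hand, the remainder is the textbook De~Giorgi scheme.
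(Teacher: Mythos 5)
Your proposal is correct and follows essentially the same route as the paper: both run the standard De~Giorgi iteration over the nested radii $r_n=\tfrac{R}{2}(1+2^{-n})$ and levels $c_n=k(1-2^{-n})$, using Proposition~\ref{c_prop} plus Sobolev, then the interpolation/Chebyshev pair to close the recursion $A_{n+1}\lesssim C_0^nLA_n^{1+\mu}$, and finally choosing $k$ to satisfy the smallness condition on $A_0$. The only cosmetic difference is that the paper works with the normalized quantities $x_m:=\tfrac{1}{c^{2\theta}|B_R|}\langle v_m^{2\theta}\mathbf{1}_{B_m}\rangle$, whereas you keep $A_n$ unnormalized and solve for $k$ at the end; the exponent bookkeeping (your $\mu$ equals the paper's $\alpha=1-\theta\tfrac{d-2}{d}$) checks out and yields the same bound.
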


We prove Proposition \ref{mp}, armed with Proposition \ref{c_prop}, using  De Giorgi's method. Here we can follow Hara \cite{H}. For reader's convenience, we included the details.

\begin{proof}
Without loss of generality, $x=0$. 
Proposition \ref{c_prop} yields
$$
\|v\|_{W^{1,2}(B_{r})} \leq \tilde{K}(R-r)^{-1}\|v\|_{L^2(B_{R})}, \quad v:=(u-c)_+,\;\;c \in \mathbb R
$$
(we used $R-r \leq 1$).
By the Sobolev Embedding Theorem,
$$
\|v\|_{L^{\frac{2d}{d-2}}(B_{r})} \leq C(R-r)^{-1}\|v\|_{L^2(B_{R})}.
$$
So, by H\"{o}lder's inequality,
\begin{equation}
\label{ineq_h}
\|v\|_{L^{\frac{2d}{d-2}}(B_{r})} \leq C(R-r)^{-1}|B_{R}|^{\frac{\theta-1}{2\theta}}\|v\|_{L^{2\theta}(B_{R})}.
\end{equation}
Set $$R_m:=R\big(\frac{1}{2}+\frac{1}{2^{m+1}}\big), \quad B_m \equiv B_{R_m}, \quad m \geq 0,$$
so $B_R=B_0 \supset B_1 \supset \dots \supset B_{R/2}$.
Then, by \eqref{ineq_h},
$$
\|v\|_{L^{\frac{2d}{d-2}}(B_{m+1})}^2 \leq \hat{C} 2^{2m}|B_{m}|^{\frac{2}{d}+1-\frac{1}{\theta}}\|v\|_{L^{2\theta}(B_{m})}^2. 
$$
On the other hand,
$$
\frac{1}{|B_R|}  \langle v^{2\theta}\mathbf{1}_{B_{m+1}}\rangle \leq \biggl(\frac{1}{|B_R|}  \langle v^{\frac{2d}{d-2}}\mathbf{1}_{B_{m+1}}\rangle  \biggr)^{\theta\frac{d-2}{d}} \biggl(\frac{|B_{m+1} \cap \{v>0\}|}{|B_R|} \biggr)^{1-\theta\frac{d-2}{d}}.
$$
Applying the previous inequality in the first multiple in the RHS, we obtain
$$
\frac{1}{|B_R|}  \langle v^{2\theta}\mathbf{1}_{B_{m+1}}\rangle \leq \tilde{C}\frac{2^{2\theta m}}{|B_R|}\langle v^{2\theta}\mathbf{1}_{B_m} \rangle \biggl(\frac{|B_{m+1} \cap \{v>0\}|}{|B_R|} \biggr)^{1-\theta\frac{d-2}{d}}.
$$
Now, put $v_m:=(u-c_m)_+$ where $$c_m:=c(1-2^{-m}) \rightarrow c.$$ Then
$$
\frac{1}{c^{2\theta}|B_R|}  \langle v_{m+1}^{2\theta}\mathbf{1}_{B_{m+1}}\rangle \leq \tilde{C}\frac{2^{2\theta m}}{c^{2\theta}|B_R|}\langle v_{m+1}^{2\theta}\mathbf{1}_{B_m} \rangle \biggl(\frac{|B_{m+1} \cap \{u>c_{m+1}\}|}{|B_R|} \biggr)^{1-\theta\frac{d-2}{d}}.
$$
Hence, using
$$
\frac{|B_{m+1} \cap \{u>c_{m+1}\}|}{|B_R|} \leq \frac{(c_{m+1}-c_m)^{-2\theta}}{|B_R|} \langle v^{2\theta}_m \mathbf{1}_{B_{m+1}} \rangle,
$$
we obtain
$$
\frac{1}{c^{2\theta}|B_R|}  \langle v_{m+1}^{2\theta}\mathbf{1}_{B_{m+1}}\rangle \leq C 2^{2\theta m(2-\theta\frac{d-2}{d})} \biggl(\frac{1}{c^{2\theta}|B_R|}\langle v_m^{2\theta}\mathbf{1}_{B_{m}}\rangle\biggr)^{2-\theta\frac{d-2}{d}}.
$$
Denote
$
x_m:=\frac{1}{c^{2\theta}|B_R|}\langle v_m^{2\theta}\mathbf{1}_{B_{m}}\rangle
$
and fix $c$ by $$c^{2\theta}:=C^{\frac{1}{\alpha}}\gamma^{\frac{1}{\alpha^2}}\frac{1}{|B_R|}\langle v^{2\theta}\mathbf{1}_{B_{R}}\rangle \quad \text{ where } \alpha:=1-\theta\frac{d-2}{d},\;\; \gamma:=2^{2\theta (2-\theta\frac{d-2}{d})}.$$
Thus, we have
$$
x_{m+1} \leq C\gamma^m x_m^{1+\alpha}
$$
where, clearly, $x_0=C^{-\frac{1}{\alpha}}\gamma^{-\frac{1}{\alpha^2}}$. Hence, by a standard result \cite[Sect.7.2]{G}, $x_m \rightarrow 0$ as $m \rightarrow \infty$. It follows that
$$
\sup_{B_{R/2}}u_+ \leq c,
$$
which yields the claimed inequality.
\end{proof}

\begin{proposition} 
\label{mp2}
For every $0<p<\infty$ there exists a generic constant $K$ such that, for all $0 <R \leq R_0\leq 1$, where $B_{R_0}(x) \subset\subset \Omega$,
\begin{align*}
\sup_{B_{\frac{R}{2}}(x)}u  \leq K \biggl(\frac{1}{|B_R(x)|} \langle u_+^{p} \mathbf{1}_{B_R}(x)\rangle\biggr)^{\frac{1}{p}}. 
\end{align*}
\end{proposition}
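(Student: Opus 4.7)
The plan is to bootstrap Proposition \ref{mp}, which yields the mean value inequality at the fixed exponent $q_0 := 2\theta$, to every $p \in (0,\infty)$ via the standard Moser interpolation. My first preparatory step is to upgrade \eqref{mp_f} to a nested-ball form: for concentric balls with $0 < r < s \leq R_0$ and $B_{R_0}(x)\subset\subset \Omega$,
\begin{equation*}
\sup_{B_r(x)} u_+ \leq K_0 (s-r)^{-d/q_0}\biggl(\int_{B_s(x)} u_+^{q_0}\,dy\biggr)^{1/q_0},
\end{equation*}
with $K_0$ generic. This follows by applying Proposition \ref{mp} centered at each $y \in B_r(x)$ with outer radius $s-r$ (noting that $B_{s-r}(y) \subset B_s(x)$ for such $y$, and the analogous $R_0$-hypothesis centered at $y$ is inherited from $B_{R_0}(x) \subset\subset \Omega$) and then taking the supremum over $y$.

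When $p \geq q_0$ the claim is immediate from Proposition \ref{mp} and H\"{o}lder's inequality applied to the mean on the right-hand side of \eqref{mp_f}. The substantial case is $0 < p < q_0$, treated by the interpolation trick $u_+^{q_0} = u_+^p\cdot u_+^{q_0-p}$, with the second factor dominated by $(\sup_{B_s(x)} u_+)^{q_0-p}$. Setting $\phi(r) := \sup_{B_r(x)} u_+$ and $A := \langle u_+^p\mathbf{1}_{B_R(x)}\rangle$, the nested-ball inequality becomes
\begin{equation*}
\phi(r) \leq K_0(s-r)^{-d/q_0}\phi(s)^{1-p/q_0}A^{1/q_0}, \qquad R/2 \leq r < s \leq R.
\end{equation*}
Young's inequality with conjugate exponents $q_0/(q_0-p)$ and $q_0/p$ converts this into the absorbable form
\begin{equation*}
\phi(r) \leq \tfrac{1}{2}\phi(s) + C(s-r)^{-d/p}A^{1/p}.
\end{equation*}

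Finally, the standard absorption lemma for bounded functions on $[R/2, R]$ (the same type of iteration as in \cite[Sect.~7.2]{G}, used in the proof of Proposition \ref{mp}) yields $\phi(R/2) \leq C' R^{-d/p} A^{1/p}$, which, after rewriting the prefactor in terms of $|B_R(x)|^{-1/p}$ and using $\sup u \leq \sup u_+$, is the desired estimate. The main subtlety is this absorption step itself: since $\phi(s)$ sits on a larger ball than $\phi(r)$, it cannot be moved to the left by hand, but the coefficient $\tfrac{1}{2} < 1$ permits a convergent geometric iteration along a dyadic sequence of radii between $R/2$ and $R$. The a priori finiteness of $\phi(R)$, required to initiate the iteration, is guaranteed by the smoothness of $u$ assumed in Theorem \ref{thm1}.
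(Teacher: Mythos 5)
Your proposal is correct and follows essentially the same route as the paper: interpolate $u_+^{2\theta}\le u_+^p(\sup u_+)^{2\theta-p}$ in the nested-ball version of Proposition \ref{mp}, apply Young's inequality to get the $\tfrac12$-absorbable form, and conclude with the standard absorption/iteration lemma (\cite[Lemma 6.1]{G}). Your explicit covering argument for the nested-ball form and the careful bookkeeping of the exponents $1/p$ are details the paper leaves implicit, but the proof is the same.
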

\begin{proof}
We follow \cite{H}. If $p \geq 2\theta$ for some $\theta<\frac{d}{d-2}$, then the result follows by Proposition \ref{mp} and H\"{o}lder's inequality. If $0<p<2\theta$, then the proof goes as follows.
Proposition \ref{mp} yields: for all $0<r<R \leq R_0$ (without loss of generality, $x=0$),
$$
\sup_{B_r}u_+  \leq K \biggl(\frac{1}{(R-r)^d} \langle u_+^{2\theta} \mathbf{1}_{B_R}\rangle\biggr)^{\frac{1}{2\theta}}. 
$$
Hence
$$
\sup_{B_r}u_+ \leq K \biggl(\frac{1}{(R-r)^d} \langle u_+^{p} \mathbf{1}_{B_R}\rangle\biggr)^{\frac{1}{2\theta}}(\sup_{B_R}u_+)^{1-\frac{q}{2\theta}}.
$$
Now, applying Young's inequality, we arrive at
$$
\sup_{B_r}u_+ \leq \frac{1}{2}\sup_{B_R}u_+ + \tilde{K}\frac{1}{(R-r)^d} \langle u_+^{p} \mathbf{1}_{B_R}\rangle, \quad 0<r<R \leq 1.
$$
The result now follows upon applying \cite[Lemma 6.1]{G}.
\end{proof}

\bigskip

\subsection{The $\inf$ bound}

The main result of this section is Proposition \ref{inf_prop}.

\begin{proposition}
\label{pp_prop}
There exists generic constants $C$ and $q>0$ such that, if $u \geq c_0>0$ is a solution to \eqref{eq2} in $B_{2R}(x) \subset\subset \Omega$, then
\begin{equation*}
\biggl(\frac{1}{|B_R(x)|} \langle u^q \mathbf{1}_{B_R(x)}\rangle\biggr)\biggl(\frac{1}{|B_R(x)|} \langle u^{-q} \mathbf{1}_{B_R}(x)\rangle\biggr) \leq C^2.
\end{equation*}
\end{proposition}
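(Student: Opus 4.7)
The plan is to follow Moser's classical route: establish a log estimate on $\nabla \log u$, convert it to a BMO estimate via Poincar\'{e}, and apply the John--Nirenberg inequality. Concretely, if one can prove
$$
\int_{B_{R/2}(x)} |\nabla \log u|^2\, dx \leq K R^{d-2}
$$
for a generic $K$, then the $L^2$-Poincar\'{e} inequality applied ball-by-ball inside $B_{R/2}(x)$ yields $\|\log u\|_{{\rm BMO}(B_{R/2}(x))} \leq K'$ with generic $K'$, and John--Nirenberg produces generic $q>0$ and $C$ such that
$$
\frac{1}{|B|}\int_B e^{\pm q(\log u - (\log u)_B)}\, dx \leq C
$$
on sub-balls $B \subset B_{R/2}(x)$. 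Multiplying the $+q$ and $-q$ inequalities cancels the mean $(\log u)_B$ and gives the claimed product estimate; the ball $B_R(x)$ in the statement is recovered by running the argument with $R$ replaced by $2R$.

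The core of the work is the log estimate. Setting $w := \log u$ and testing \eqref{eq2} against $\eta^2/u$ for a cut-off $\eta$ between $B_{r_1}$ and $B_{r_2}$ as in the proof of Proposition \ref{c_prop}, one obtains the identity
$$
\int a\nabla w\cdot \nabla w\, \eta^2 = 2\int a\nabla w\cdot \nabla \eta\, \eta + \int b\cdot\nabla w\, \eta^2.
$$
The first RHS term is absorbed by Young's inequality, contributing $\lesssim R^{d-2}$. For the drift term, I would use the freedom $\nabla w = \nabla(w-c)$ with $c:=(w)_{B_R(x)}$ and integrate by parts:
$$
\int b\cdot \nabla w\,\eta^2 = -\int ({\rm div\,}b)(w-c)\eta^2 - 2\int b\cdot \nabla \eta\,(w-c)\eta.
$$
The first piece is controlled by Cauchy--Schwarz with weight $|{\rm div\,}b|$ together with form-boundedness \eqref{nu}, \eqref{nu2} and the $L^2$-Poincar\'{e} inequality (available precisely because of the choice $c=(w)_{B_R(x)}$); the second by Cauchy--Schwarz with weight $|b|$ followed by two applications of multiplicative form-boundedness $b\in \mathbf{MF}_\delta$, one to $(w-c)\eta$ (Poincar\'{e} again) and one to $|\nabla \eta|$.

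The main obstacle is the same phenomenon encountered in Proposition \ref{c_prop}: each application of multiplicative form-boundedness forces $\nabla w$ to reappear on a ball strictly larger than $B_{r_1}$, ruling out a one-step absorption. Exactly as there, I would close the argument by ``Caccioppoli's iterations'' over a geometric sequence of radii between $R/2$ and $R$, normalized appropriately (here by $R^{(d-2)/2}$), and bound the supremum over the sequence via a quadratic inequality analogous to the one for $\beta$ in the proof of Proposition \ref{c_prop}. This delivers the log estimate with generic constants, after which the BMO and John--Nirenberg steps are routine.
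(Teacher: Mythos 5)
Your overall route (log transform, testing against $\eta^2/u$, integration by parts in the drift term, multiplicative form-boundedness plus Poincar\'{e}, ``Caccioppoli's iterations'', then BMO and John--Nirenberg) is exactly the paper's, but one concrete step, as you describe it, fails: fixing the constant once and for all as $c=(w)_{B_R(x)}$. With that choice, Poincar\'{e} controls $\|w-c\|_{L^2(B_{r_2})}$ only through $\|w-c\|_{L^2(B_R(x))}\le CR\|\nabla w\|_{L^2(B_R(x))}$, i.e.\ through the unknown at the \emph{outermost} radius, since $(w)_{B_R(x)}$ may differ from $(w)_{B_{r_2}}$ by an amount governed by the oscillation of $w$ in the annulus $B_R(x)\setminus B_{r_2}$. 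Consequently every step of your iteration produces, via $\|(w-c)\eta\|_2$ and $\|(w-c)\nabla\eta\|_2$, the quantity $\|\nabla w\|_{L^2(B_R(x))}$ on the right-hand side rather than the gradient norm at the next intermediate radius; it cannot be absorbed into a left-hand side living on a smaller ball, and it does not decay along the iteration, so the scheme never closes to a generic bound. The cure is precisely what the paper does in Proposition \ref{pp_prop}: since the integration-by-parts identity holds for every constant, re-choose $c$ at each step as the mean of $w$ over the \emph{current} outer ball ($c=(w)_{B_{m+1}}$ there), so that Poincar\'{e} yields $r\|\nabla w\|_{L^2(B_{m+1})}$ and the pre-estimate takes the iterable form $\|\nabla w\|^2_{L^2(B_m)}\le C^2 4^m r^{d-2}+C\,4^m r^{\frac d2-1}\|\nabla w\|_{L^2(B_{m+1})}$. (Note also that here the coefficient of the next-radius term grows like $4^m$, so the plain ``$\beta^2\le 1+\beta$'' supremum argument of Proposition \ref{c_prop} does not apply verbatim; the paper instead iterates directly, with geometrically decaying exponents, or one must renormalize differently.)

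A second, smaller imprecision: the single-scale estimate $\int_{B_{R/2}(x)}|\nabla\log u|^2\,dx\le KR^{d-2}$ does not by itself give $\log u\in{\rm BMO}$; Poincar\'{e} on a sub-ball $B_\rho(y)$ then only bounds the mean oscillation by $K^{1/2}(R/\rho)^{\frac d2-1}$, which blows up as $\rho\downarrow 0$. One needs the same estimate at every scale and centre, i.e.\ $\int_{B_{\rho/2}(y)}|\nabla\log u|^2\,dx\le K\rho^{d-2}$ for all $B_\rho(y)\subset B_{2R}(x)$, which your argument (once repaired as above) does give and which is exactly how the paper arrives at \eqref{bmo} before invoking John--Nirenberg; after that, multiplying the $+q$ and $-q$ exponential bounds is fine.
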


The proof of Proposition \ref{pp_prop} consists of an iteration-type procedure similar to the one in the proof of Cacciopolli's inequality (Proposition \ref{c_prop}), and Moser's method, cf.\,\cite{H}.

\begin{proof}

\textit{Step 1.} We work over a ball $B_r(y) \subset B_{2R}(x)$, $r \leq 1$. Without loss of generality, $y=0$. Let $\zeta_{m}$ be $[0,1]$-valued smooth cut-off functions:
$$
\zeta_m(x)=\left\{
\begin{array}{ll}1 & |x| \leq r_{m}, \\
0 & |x| \geq r_{m+1},
\end{array}
\right. \qquad r_m:=r\left(1-\frac{1}{2^m} \right), \;\;m \geq 1,
$$
satisfying
$$
|\nabla \zeta_m| \leq C \frac{2^m}{r}, \qquad |\nabla|\nabla \zeta_m|| \leq C \frac{4^{m}}{r^2}.
$$ 
We multiply equation \eqref{eq2} by $u^{-1}\zeta_m^2$, obtaining, after integrating by parts, using $\sigma I \leq a \leq \xi I$ and applying quadratic inequality,
$$
\sigma \big\langle (\nabla w)^2 \zeta_m^2 \big\rangle \leq \varepsilon \xi \big\langle (\nabla w)^2 \zeta_m^2 \big\rangle + \frac{\xi}{\varepsilon} \langle (\nabla \zeta_m)^2 \rangle + \langle b \cdot \nabla w,\zeta_m^2\rangle.
$$
where $w:=\log u$. Hence, provided that $\varepsilon$ is fixed by $C_1:=\sigma-\varepsilon \xi>0$,
\begin{equation}
\label{i20}
C_1 \big\langle (\nabla w)^2 \zeta_m^2 \big\rangle \leq C_2 4^m r^{d-2} + \langle b \cdot \nabla w,\zeta_m^2\rangle.
\end{equation}
We need to estimate the last term. Integrating by parts, we have
$$
\langle b \cdot \nabla w,\zeta_m^2\rangle = - 2 \langle b (w-c),\zeta_m \nabla \zeta_m \rangle - \langle {\rm div\,}b,w\zeta_m^2 \rangle
$$
for any constant $c$ (we will chose its value later). Thus, 
\begin{equation}
\label{b__}
\langle b \cdot \nabla w,\zeta_m^2\rangle \leq 2 \big\langle |b||\nabla \zeta_m|^2\big\rangle^{\frac{1}{2}} \big\langle |b|(w-c)^2 \zeta_m^2 \big\rangle^{\frac{1}{2}} + \langle {\rm div\,}b_-,(w-c)\zeta_m^2 \rangle,
\end{equation}
where, recall ${\rm div\,}b={\rm div\,}b_+ - {\rm div\,}b_-$ for bounded smooth ${\rm div\,}b_\pm \geq 0$ that satisfy \eqref{nu}, \eqref{nu2}.

\smallskip

1)  We estimate the first multiple in the RHS of \eqref{b__}: by $b \in \mathbf{MF}$ (taking into account $r \leq 1$):
\begin{equation}
\label{i21}
\big\langle |b||\nabla \zeta_m|^2\big\rangle^{\frac{1}{2}} \leq C_3 2^{\frac{3}{2}m}r^{\frac{d-3}{2}}.
\end{equation}

2) 
The second multiple is estimated as
\begin{align*}
\big\langle |b|(w-c)^2 \zeta_m^2 \big\rangle^{\frac{1}{2}} & \leq \sqrt{\delta} \bigg( \|\zeta_m \nabla w\|_2 + \|(w-c)\nabla \zeta_m\|_2 \bigg)^{\frac{1}{2}}\|(w-c)\zeta_m\|^{\frac{1}{2}}_2 + \sqrt{c_\delta}\|(w-c)\zeta_m\|_2
\end{align*}
Therefore, setting $B_m:=B_{r_m}$, we have
\begin{align*}
\big\langle |b|(w-c)^2 \zeta_m^2 \big\rangle^{\frac{1}{2}} & \leq \sqrt{\delta} \bigg( \|\nabla w\|_{L^2(B_{m+1})} + 2^m r^{-1} \|w-c\|_{L^2(B_{m+1})} \bigg)^{\frac{1}{2}}\|w-c\|^{\frac{1}{2}}_{L^2(B_{m+1})} \\
& + \sqrt{c_\delta}\|w-c\|_{L^2(B_{m+1})}.
\end{align*}
Select $c$ to be the average $(w)_{B_{m+1}}:=|B_{m+1}|^{-1}\langle w\mathbf{1}_{B_{m+1}}\rangle$ of $w$ over $B_{m+1}$. Then, using the Poincar\'{e} inequality $\|w-c\|_{L^2(B_{m+1})} \leq C_0 r\|\nabla w\|_{L^2(B_{m+1})}$, we obtain
\begin{equation}
\label{i22}
\big\langle |b|(w-c)^2 \zeta_m^2 \big\rangle^{\frac{1}{2}} \leq C_4 2^{\frac{1}{2}m}r^{\frac{1}{2}}\|\nabla w\|_{L^2(B_{m+1})}.
\end{equation}

\smallskip

3) Finally, we estimate the last term in the RHS of \eqref{b__}:
\begin{equation}
\label{i220}
\langle {\rm div\,}b_-,(w-c)\zeta_m^2 \rangle \leq  \langle {\rm div\,}b_-,(w-c)^2\zeta_m^2  \rangle^{\frac{1}{2}}\langle {\rm div\,}b_-,\zeta_m^2  \rangle^{\frac{1}{2}},
\end{equation}
where, by the form-boundedness assumption \eqref{nu2},
\begin{align}
 \langle {\rm div\,}b_-,& (w-c)^2\zeta_m^2  \rangle \notag \\
& \leq \nu_-(1+\varepsilon_1)\|(\nabla w)\zeta_m\|_2^2 + \nu_- (1+\varepsilon_1^{-1})\|(w-c)\nabla \zeta_m\|_2^2 + c_{\nu_-} \|(w-c)\zeta_m\|_2^2  \label{i22_} \\
& \leq \nu_-(1+\varepsilon_1)\|(\nabla w)\zeta_m\|_2^2 + \nu_- (1+\varepsilon_1^{-1})4^{m}r^{-2}\|w-c\|_{L^2(B_{m+1})}^2 + c_{\nu_-} \|w-c\|_{L^2(B_{m+1})}^2 \notag
\end{align}
and
\begin{align}
\langle {\rm div\,}b_-,\zeta_m^2  \rangle & \leq \nu_-\|\nabla \zeta_m\|_2^2 + c_{\nu_-} \|\zeta_m\|_2^2 \notag \\
& \leq C_5 4^mr^{d-2}. \label{i22__}
\end{align}
Hence, applying \eqref{i22_}, \eqref{i22__} in \eqref{i220}, using the quadratic inequality and the Poincar\'{e} inequality as above, we obtain
\begin{equation}
\label{div_}
\langle ({\rm div\,}b)_-,(w-c)\zeta_m^2 \rangle \leq  C_6 \varepsilon_2 \|(\nabla w)\zeta_m\|_2 + \frac{C_7}{4\varepsilon_2}  4^mr^{d-2} + C_7 4^m r^{\frac{d}{2}-1}\|\nabla w\|_{L^2(B_{m+1})}.
\end{equation}

We now apply \eqref{i22_}, \eqref{i22__} and \eqref{div_} (with $\varepsilon_2$ chosen sufficiently small) in \eqref{i220}, arriving at
$$
\big\langle (\nabla w)^2 \zeta_m^2 \big\rangle \leq C^2 4^{m} r^{d-2} + C 4^{m} r^{\frac{d}{2}-1}\|\nabla w\|_{L^2(B_{m+1})}
$$
for appropriate constant $C$.
Therefore,
\begin{equation}
\label{i23}
\|\nabla w\|_{L^2(B_{m})}^2 \leq C^2 4^{m} r^{d-2} + C 4^{m} r^{\frac{d}{2}-1}\|\nabla w\|_{L^2(B_{m+1})}
\end{equation}
for all $m=1,2,\dots$

\medskip

\textit{Step 2.~}We are going to iterate inequality \eqref{i23}. Put
$$
x_m:=\frac{\|\nabla w\|_{L^2(B_m)}}{C 2^m r^{\frac{d}{2}-1}}
$$
so \eqref{i23} becomes
$$
x_m^2 \leq 1 + 2^{m+1} x_{m+1}.
$$
We may assume without loss of generality that all $x_m \geq 1$ (if $x_{m_0} \leq 1$ for some $m_0$, then we are already done). Thus,
$$
x_m^2 \leq 2^{m+2}x_{m+1}.
$$
On the other hand, all $x_m$ ($m=1,2,\dots$) are bounded by a non-generic but independent of $m$ constant $\frac{\|\nabla w\|_{L^2(B_r)}}{2C  r^{\frac{d}{2}-1}}$.
Hence we can iterate the previous inequality:
$$
x_m^2 \leq 2^{m+2}x_{m+1} \leq 2^{m+2}2^{\frac{m+3}{2}}x_{m+2}^{\frac{1}{2}} \leq 2^{m+2}2^{\frac{m+3}{2}} \dots 2^{\frac{m+n}{2^n}} x_{m+1+n}^{\frac{1}{2^n}} \leq 2^{\sum_{n=1}^\infty \frac{m+1+n}{2^n}}=:c(m).
$$
In particular, $x_1^2 \leq c(1)$, which yields
\begin{equation}
\label{ineq_w}
\|\nabla w\|_{L^2(B_{r/2})} \leq Kr^{\frac{d}{2}-1}
\end{equation}
for a generic constant $K$. 

\medskip

\textit{Step 3.}~Inequality \eqref{ineq_w} is the point of departure for Moser's method.
Namely, applying Poincar\'{e}'s and H\"{o}lder's inequalities, we obtain
\begin{equation}
\label{bmo}
\frac{1}{r^d}\langle |w-(w)_{r/2}|\mathbf{1}_{B_{r/2}}\rangle \leq \tilde{K},
\end{equation}
where $(w)_{r/2}$ is the average of $w$ over $B_{r/2}$.

The centre $y$ of the ball  in \eqref{bmo} was chosen arbitrarily, and the constant $\tilde{K}$ does not depend on this choice. Thus, by \eqref{bmo}, $w \in {\rm BMO}(B_{2R}(x))$ (in what follows, for brevity, $x=0$). Now, by the John-Nirenberg inequality:
$$
\langle e^{q|w-(w)_{R}|}\mathbf{1}_{B_R} \rangle \leq C R^d
$$
for some generic $q>0$. So, we have
\begin{align*}
\langle u^q \mathbf{1}_{B_R}\rangle  \langle u^{-q} \mathbf{1}_{B_R} \rangle & =\langle e^{qw} \mathbf{1}_{B_R}\rangle \langle e^{-qw} \mathbf{1}_{B_R}\rangle \\
& = \langle e^{q(w-(w)_R)} \mathbf{1}_{B_R} \rangle \langle e^{-q(w-(w)_R)} \mathbf{1}_{B_R}\rangle\leq C^2R^{2d},
\end{align*}
as needed.
\end{proof}

\begin{proposition}[Moser]
\label{inf_prop}
There exists generic constants $C_0$ and $q>0$ such that, if $u \geq c_0>0$ is a solution to \eqref{eq2} in $B_{2R}(x) \subset \Omega$, then
$$
\biggl(\frac{1}{|B_R(x)|} \langle u^{q} \mathbf{1}_{B_R(x)}\rangle\biggr)^{\frac{1}{q}} \leq C_0 \inf_{B_{R/2}(x)}u. 
$$
\end{proposition}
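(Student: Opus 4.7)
The plan is to exploit the standard Moser duality: apply the supremum bound of Proposition \ref{mp2} to $v := u^{-1}$, combine the resulting control on an average of $u^{-q}$ with the product estimate of Proposition \ref{pp_prop}, and read off the desired bound on an average of $u^{q}$ in terms of $\inf u$.

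Since $u \geq c_0>0$ on $B_{2R}(x)$, the function $v := u^{-1}$ is smooth and nonnegative there. A direct computation using $Lu = 0$ (with $L := -\nabla \cdot a \cdot \nabla + b \cdot \nabla$) yields
$$
Lv = -2u^{-3}\langle a\nabla u,\nabla u\rangle \leq 0
$$
by the ellipticity assumption $a \geq \sigma I$. Thus $v$ is a classical nonnegative subsolution of $L$ on $B_{2R}(x)$. Inspecting the proofs of Propositions \ref{c_prop}--\ref{mp2}, one sees that the equation is tested throughout against nonnegative cut-offs of the form $(u-c)_+\eta$, so it is used only in its one-sided (subsolution) form; moreover, neither $b$ nor ${\rm div\,}b$ is altered by the substitution $u\mapsto v$, so the multiplicative form-boundedness hypothesis on the drift and the form-boundedness hypothesis on ${\rm div\,}b$ remain untouched. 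Consequently Proposition \ref{mp2} applies to the subsolution $v$; taking $p$ equal to the exponent $q$ supplied by Proposition \ref{pp_prop} gives
$$
\sup_{B_{R/2}(x)} u^{-1} \leq K \left(\frac{1}{|B_R(x)|}\langle u^{-q}\mathbf{1}_{B_R(x)}\rangle\right)^{1/q}.
$$

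Taking $q$-th roots in Proposition \ref{pp_prop} provides
$$
\left(\frac{1}{|B_R(x)|}\langle u^{q}\mathbf{1}_{B_R(x)}\rangle\right)^{1/q} \left(\frac{1}{|B_R(x)|}\langle u^{-q}\mathbf{1}_{B_R(x)}\rangle\right)^{1/q} \leq C^{2/q},
$$
and inverting the second factor by means of the previous display (using $\sup u^{-1} = (\inf u)^{-1}$) yields the claimed inequality with $C_0 := C^{2/q} K$. The only step that is not purely algebraic is the observation that Propositions \ref{c_prop}--\ref{mp2} extend without change to nonnegative subsolutions; this is immediate from the structure of their proofs and deserves only a one-line comment at the very start of the argument.
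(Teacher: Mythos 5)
Your proof is correct and follows essentially the same route as the paper: exploit that a negative power of $u$ is a positive subsolution of the same operator (the paper notes $u^{1-p}$ is a subsolution and that Propositions \ref{c_prop} and \ref{mp} extend to positive subsolutions word by word), apply the De Giorgi sup bound to it, and combine with Proposition \ref{pp_prop}. The only difference is a harmless reparametrization -- you apply Proposition \ref{mp2} with exponent $q$ directly to $u^{-1}$, while the paper applies Proposition \ref{mp} to $u^{1-p}$ with $p-1=\frac{q}{2\theta}$ -- which yields the same estimate $\sup_{B_{R/2}}u^{-1} \leq K\bigl(\frac{1}{|B_R|}\langle u^{-q}\mathbf{1}_{B_R}\rangle\bigr)^{1/q}$.
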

\begin{proof}
Let $x=0$.
Multiplying equation \eqref{eq2} by $u^{-p}$ ($p>1$), we obtain that $u^{-p+1}$ is a sub-solution of \eqref{eq2}:
$$
-\nabla \cdot a \cdot \nabla u^{-p+1} + b \cdot \nabla u^{-p+1} \leq 0.
$$ 
We can repeat the proofs of Proposition \ref{c_prop} and of Proposition \ref{mp} for positive sub-solutions of \eqref{eq2} essentially word by word.
Fix $p$ by $p-1=\frac{q}{2\theta}$ for any $1<\theta<\frac{d}{d-2}$. Then, by Proposition \ref{mp},
$$
\sup_{B_{R/2}}u^{-1} \leq K^{\frac{2\theta}{q}} \biggl(\frac{1}{|B_R|} \langle u^{-q} \mathbf{1}_{B_R}\rangle\biggr)^{\frac{1}{q}}.
$$
Hence
\begin{align*}
\inf_{B_{R/2}}u & \geq K^{-\frac{2\theta}{q}} \biggl(\frac{1}{|B_R|} \langle u^{-q} \mathbf{1}_{B_R}\rangle\biggr)^{-\frac{1}{q}} \\
& \text{(we are applying Proposition \ref{pp_prop})} \\
& \geq C^{-\frac{2}{q}}K^{-\frac{2\theta}{q}}  \biggl(\frac{1}{|B_R|} \langle u^{q} \mathbf{1}_{B_R}\rangle\biggr)^{\frac{1}{q}},
\end{align*}
as needed.
\end{proof}

\medskip

We are in position to complete the proof of Theorem \ref{thm1}. 
Propositions \ref{mp2} and \ref{inf_prop} yield Harnack's inequality for $u \geq c_0>0$ in $B_{2R}(x)$. A simple limiting argument allows to extend it to $u \geq 0$ in $B_{2R}(x)$. The H\"{o}lder continuity of  $u$ now follows using a standard argument. The gradient estimate is a standard consequence of Cacciopolli's inequality (Proposition \ref{c_prop}), see e.g.\,\cite{H}. \hfill \qed

\begin{remark}
\label{bb_rem}
The iteration procedure of Propositions \ref{c_prop} and \ref{pp_prop} extends to vector fields
\begin{equation*}
b=b_1+b_2, \quad b_1 \in \mathbf{MF}, \quad b_2 \in \mathbf{BMO}^{-1}
\end{equation*}
where ${\rm div\,}b_1$ satisfies the assumptions of Theorem \ref{thm1}, $b_2=\nabla F$ for skew-symmetric $F$ with entries in ${\rm BMO}$. Namely, repeating the proof of Proposition \ref{c_prop} for such $b$ (cf.\,the proof of \eqref{inv_holder} below), one obtains an extra term $\langle b_2 \cdot \nabla v,v\eta\rangle$, which one estimates as in \cite[Lemma 8]{H}:
$$
|\langle b_2 \cdot \nabla v,v\eta\rangle| \leq \varepsilon \langle |\nabla u|\eta\rangle+ \frac{1}{4\varepsilon}\langle (F-(F)_{B_R})^2 v^2 \frac{|\nabla \eta|^2}{\eta} \rangle, \quad \varepsilon>0,
$$
and so the pre-Caccioppoli's inequality now takes form
\begin{align*}
\frac{\|(\nabla v)\mathbf{1}_{B_{r_1}}\|_2^2}{\|v\mathbf{1}_{B_{R}}\|^2_2}  \leq \frac{C_1}{r_2-r_1}\frac{\|(\nabla v)\mathbf{1}_{B_{r_2}}\|_2}{\|v\mathbf{1}_{B_{R}}\|_2} + C_2 \biggl(1+\frac{1}{(r_2-r_1)^2} \bigg)\bigg(1+\frac{\|(F-(F)_{B_R})v\mathbf{1}_{B_{R}}\|_2^2}{\|v\mathbf{1}_{B_{R}}\|^2_2}\bigg). 
\end{align*}
One can now iterate this inequality in the same was as it was done in the proof of Proposition \ref{c_prop}, arriving, instead of \eqref{c_ineq}, at a Caccioppoli-type inequality as in \cite{H} (cf.\,(b) in the introduction):
$$
\langle |\nabla v|^2 \mathbf{1}_{B_{r}}\rangle \leq K(R-r)^{-2} \langle [1+(F-(F)_{B_R})^2] v^2 \mathbf{1}_{B_{R}}\rangle, \quad B_R=B_R(x).
$$
Having the last inequality at hand, one then runs De Giorgi's method as in \cite{H}. The proof of Proposition \ref{pp_prop} is modified similarly; then we refer again to \cite{H}. This allows to extend Theorem \ref{thm1} to vector fields $b=b_1+b_2$ as above.

\end{remark}

\bigskip

\section{Proof of Theorem \ref{thm2}}

Given $\Omega \subset \mathbb R^d$, put $\|f\|_{p,\Omega}:=(\int_{\Omega} |f|dx)^{\frac{1}{p}}$.

\medskip

It is convenient to put Dirichlet problem \eqref{d0} in an equivalent form (at the level of weak solutions)
\begin{equation}
\label{d3}
\left\{
\begin{array}{l}
(-\nabla \cdot a \cdot \nabla + b \cdot \nabla)v=-f \\
v \in W_0^{1,2}(\Omega),
\end{array}
\right.
\end{equation}
where $f:=-\nabla \cdot a \cdot \nabla g + b \cdot \nabla$. Then the sought $u$ is given by $u=v+g$. 

Considering \eqref{d3} for $a=a_n$, $b=b_n$, $g=g_n$, $f=f_n$ and, accordingly, $v=v_n$, we multiply the equation by $v_n$ and integrate to obtain
$$
\sigma\|\nabla v_n\|_{2,\Omega}^2 \leq \frac{1}{2}\langle {\rm div\,}b_n,v_n^2\rangle - \langle f_n,v_n\rangle,
$$
so, in particular,
$$
c\sigma \epsilon \|v_n\|_{2,\Omega}^2 + \sigma (1-\epsilon)\|\nabla v_n\|_{2,\Omega}^2  \leq \frac{1}{2}\langle {\rm div\,}b_{n,+},v_n^2\rangle - \langle f_n,v_n\rangle
$$
for a small $\epsilon>0$.
In the RHS of the last inequality, using the fact that ${\rm div\,}b_{n,+}$ satisfies \eqref{nu}, we estimate
\begin{align*}
\frac{1}{2}\langle {\rm div\,}b_{n,+},v_n^2\rangle \leq \frac{\nu_+}{2}\|\nabla v_n\|_2^2 + \frac{c_{\nu_+}}{2}\|v_n\|_2^2
\end{align*}
and, applying quadratic inequality twice, we have
\begin{align*}
\big|\langle f_n,v_n\rangle\big| & \leq \sigma \|\nabla g_n\|_{2,\Omega}\|\nabla v_n\|_{2,\Omega} + \alpha \langle |b_n|,v_n^2 \rangle + \frac{1}{4\alpha}\langle |b_n|,|\nabla g_n|^2\rangle \qquad (\alpha>0)\\
& (\text{we are using $b_n \in \mathbf{MF}_\delta$ for some $\delta<\infty$, $c_\delta$ independent of $n$}) \\
& \leq \sigma \beta \|\nabla v_n\|_{2,\Omega}^2+ \frac{\sigma}{4\beta} \|\nabla g_n\|_{2,\Omega}^2 \qquad (\beta>0) \\
& + \alpha (\delta\|\nabla v_n\|_{2,\Omega}\|v_n\|_{2,\Omega}+c_\delta\|v_n\|_{2,\Omega}^2) \\
& + \frac{1}{4 \alpha }(\delta\|\nabla |\nabla g_n|\|_{2,\Omega}\|\nabla g_n\|_{2,\Omega}+c_\delta\|g_n\|_{2,\Omega}^2).
\end{align*}
Now, selecting in the previous three inequalities $\alpha$, $\beta$ sufficiently small and using $\nu_+<2\sigma$, we obtain
\begin{align*}
\|v_n\|_{W^{1,2}(\Omega)}^2 & \leq C\|g\|_{W^{2,2}(\Omega)} + C_1\|v_n\|_{2,\Omega} \\
& \leq C\|g\|_{W^{2,2}(\Omega)} + C_1|\Omega|^{\frac{1}{2}}\|v_n\|_{\infty,\Omega}
\end{align*}
for some $C$, $C_1<\infty$. Hence, taking into account that, by the maximum principle, $\|v_n\|_{L^\infty(\Omega)} \leq 2\|g_n\|_\infty (\leq 2\|g\|_\infty<\infty)$, we have
\begin{equation}
\label{nabla_v}
\|v_n\|_{W^{1,2}(\Omega)}^2 \leq C\|g\|_{W^{2,2}(\Omega)}  + 2C_1|\Omega|^{\frac{1}{2}}\|g\|_\infty.
\end{equation}
This allows to conclude that there exists a subsequence $\{v_n\}$ and a function $v \in W^{1,2}_0(\Omega)$ such that
\begin{equation}
\label{v_conv}
v_n \rightarrow v \quad \text{ weakly in } W_0^{1,2}(\Omega), \text{ strongly in $L_{\loc}^2(\Omega)$.}
\end{equation}

Let us show that thus constructed $v$ is a weak solution to \eqref{d3}:

1) For a given $\varphi \in C_c^\infty(\Omega)$, we can write
\begin{align*}
\langle a_n \cdot \nabla v_n,\nabla \varphi\rangle + \langle b_n \cdot \nabla v_n,\varphi\rangle & = \langle a_n \cdot \nabla v_n,\nabla \varphi\rangle - \langle b_n  v_n, \nabla \varphi\rangle - \langle ({\rm div\,}b_n)v_n,\varphi\rangle \\
& = \langle a \cdot \nabla v,\nabla \varphi\rangle - \langle b v,\nabla \varphi\rangle - \langle {\rm div\,b} v,\varphi\rangle\\
& +
\langle (a_n-a) \cdot \nabla v_n,\nabla \varphi\rangle + \langle a\cdot (\nabla v_n-\nabla v),\nabla \varphi\rangle \\
& - \langle (b_n-b) v_n,\nabla \varphi\rangle - \langle b(v_n-v),\nabla \varphi\rangle \\
& - \langle ({\rm div\,}b_n-{\rm div\,}b)v_n,\nabla \varphi \rangle -  \langle ({\rm div\,}b)(v_n-v),\varphi \rangle.
\end{align*}

The $-6$th term in the RHS, i.e.\,$\langle (a_n-a) \cdot \nabla v_n,\nabla \varphi\rangle$, tends to $0$: we use $\|\nabla v_n\|_{L^p(\supp \varphi)}<\infty$ for some $p>2$ (Theorem \ref{thm1}) and convergence $a_n \rightarrow a$ in $L^{p'}(\supp \varphi)$ (use \eqref{a_n} and $a_n \in (H_{\sigma,\xi})$).

The $-5$th term $\langle a\cdot (\nabla v_n-\nabla v),\nabla \varphi\rangle$, tends to $0$ since $v_n \rightarrow v$ weakly in $W_0^{1,2}(\Omega)$. 

The $-4$th term $\langle (b_n-b) v_n,\nabla \varphi\rangle$ tends to $0$ due to \eqref{b_n} and since $v_n$ are uniformly bounded on $\Omega$ ($\|v_n\|_{L^\infty(\Omega)} \leq 2\|g_n\|_\infty$ by the maximum principle, where, by our choice of $g_n$, $\|g_n\|_\infty \leq \|g\|_\infty < \infty$). 

The $-3$rd term $\langle b(v_n-v),\nabla \varphi\rangle$ goes to $0$ by $|b| \in L^1_{\loc}$ and the Dominated Convergence Theorem, using the uniform boundedness of $v_n$ on $\Omega$ and a.e.\,convergence $v_n \rightarrow v$, which follows from \eqref{v_conv} (possibly after passing to a subsequence using a diagonal argument).

The $-2$nd term $\langle ({\rm div\,}b_n-{\rm div\,}b)v_n,\nabla \varphi \rangle$ goes to $0$ by convergence \eqref{div_n2} and since $v_n$ are uniformly bounded on $\Omega$.

The $-1$st term $\langle ({\rm div\,}b)(v_n-v),\varphi \rangle$ goes to $0$ by ${\rm div\,}b \in L^1_{\loc}$ and, again, the Dominated Convergence Theorem, using uniform boundedness of $v_n$ and a.e.\,convergence $v_n \rightarrow v$.

\smallskip

2) Next, in view of our assumptions on $g$ and $g_n$,
\begin{align*}
\langle f_n,\varphi\rangle & = \langle a_n \cdot \nabla g_n,\nabla \varphi\rangle + \langle b_ng_n,\nabla \varphi\rangle \\
& \rightarrow \langle f,\varphi\rangle
\end{align*}
using the same argument as in 1), taking into account that, by \eqref{g_n} and the Rellich-Kondrashov Theorem, we may assume that $g_n \rightarrow g$ a.e.\,on $\Omega$ (of course, possibly after passing to a subsequence of $\{g_n\}$)

Combining 1) and 2), we obtain that $v$ is a weak solution to \eqref{d3}. Moreover, since $v_n$ are (uniformly in $n$) bounded on $\Omega$, so is $v$. 

Now, we have $u_n=v_n+g_n$, so, in view of our conditions on $g_n$ and $g$,
$$
u_n \rightarrow u:=v+g \quad \text{ weakly in } W^{1,2}_0(\Omega),
$$
and so $u_n \rightarrow u$ strongly in $L^2_{\loc}(\Omega)$, possibly after passing to a subsequence.
Further, since  $u_n$ are bounded on $\Omega$ by the maximum principle, so is $u$. The last statement of the theorem now follows from Theorem \ref{thm1}. \hfill \qed

\bigskip

\section{Proof of Theorem \ref{thm3}}

\textbf{1.~}To establish uniqueness of the approximation solution, it suffices to show that solutions $\{v_n\}$ to
\begin{equation}
\label{d4}
\left\{
\begin{array}{l}
(-\nabla \cdot a_n \cdot \nabla + b_n \cdot \nabla)v_n=-f_n \\
v_n=0 \text{ on } \partial\Omega,
\end{array}
\right.
\end{equation}
where $f_n:=-\nabla \cdot a_n \cdot \nabla g_n + b_n \cdot \nabla g_n$,
constitute a Cauchy sequence in $L^2(\Omega)$. (Then, clearly, solutions $u_n=v_n+g_n$ to \eqref{d2} constitute a Cauchy sequence in $L^2(\Omega)$.)
In fact, subtracting the equations for $v_n$, $v_m$ and setting $h:=v_n-v_m$, we obtain
$$
-\nabla \cdot a_n \cdot \nabla h  + b_n \cdot \nabla h  - \nabla \cdot (a_n-a_m) \cdot \nabla v_m + (b_n-b_m) \cdot \nabla v_m=-f_n+f_m, 
$$
Then, multiplying the previous identity by $h$ and integrating, we obtain
\begin{align*}
\sigma\|\nabla h\|^2_2 - \frac{1}{2}\langle {\rm div\,}b_n,h^2\rangle & \leq |\langle (a_n-a_m) \cdot \nabla v_m,\nabla h\rangle| + |\langle (b_n-b_m) \cdot \nabla v_m,h\rangle| \notag  \\
& + |\langle f_n-f_m,h\rangle|.
\end{align*}
Hence, using $\frac{1}{2}\langle ({\rm div\,}b_{n,+}),h^2\rangle \leq \frac{1}{2}\nu_+\|\nabla h\|^2_2$ (by the assumption of the theorem), we have
\begin{align}
(\sigma-\nu_+)\|\nabla h\|_2   & \leq |\langle (a_n-a_m) \cdot \nabla v_m,\nabla h\rangle| + |\langle (b_n-b_m) \cdot \nabla v_m,h\rangle| \notag  \\
& + |\langle f_n-f_m,h\rangle|.
\label{a_b}
\end{align}
Recall that, by our assumption, $\sigma-\frac{\nu_+}{2}>0$.
Thus, our goal is to show that all terms in the RHS of \eqref{a_b} tend to $0$ as $n$, $m \rightarrow \infty$; this would imply that $\{v_m\}$ is indeed a Cauchy sequence in $L^2(\Omega)$.

\smallskip

1) Let us get rid of the last term in the RHS of \eqref{a_b}:
\begin{align*}
\langle f_n-f_m,h\rangle=\langle (a_n-a_m)\rangle & = \langle (a_n-a_m) \cdot \nabla g_n,\nabla h\rangle \\
& + \langle a_m \cdot \nabla (g_n-g_m),\nabla h\rangle \\
& + \langle (b_n-b_m)\cdot \nabla g_n,h\rangle \\
& + \langle b_m \cdot \nabla (g_n-g_m),h\rangle.
\end{align*}
All four terms in the RHS tends to $0$ as $n,m \rightarrow \infty$. This follows, upon applying H\"{o}lder's inequality, from the uniform boundedness of $|\nabla h|$ in $L^2(\Omega)$ (cf.\,\eqref{nabla_v} in  the proof of Theorem \ref{thm2}) and convergence $\nabla g_n-\nabla g_m \rightarrow 0$ in $[L^{\frac{1+\epsilon}{\epsilon}}_\loc]^d$, $b_n-b_m \rightarrow 0$ in $[L^p_{\loc}]^d$ ($p \geq 1+\epsilon$) as $n,m \rightarrow \infty$.

\smallskip

2) We now treat the first two terms in the RHS of \eqref{a_b}. 
\begin{align*}
|\langle (b_n-b_m) \cdot \nabla v_m,h\rangle| & \leq \|b_n-b_m\|_{p,\Omega}\|\nabla v_m\|_{p',\Omega}\|h\|_{\infty,\Omega} \\
& \leq \|b_n-b_m\|_{p,\Omega}\|\nabla v_m\|_{p',\Omega}\,2\|g\|_{\infty}.
\end{align*}
If we can prove a uniform in $n$ bound
\begin{equation}
\label{grad2}
\|\nabla v_m\|_{p',\Omega} \leq C \quad \text{ for some } p'>2,
\end{equation}
it would imply that $\langle (b_n-b_m) \cdot \nabla v_m,h\rangle \rightarrow 0$ as $n, m \rightarrow \infty$, since $b_n-b_m \rightarrow 0$ in $[L^p_{\loc}]^d$ by the assumption of the theorem, with $p\,(=\frac{p'}{p'-1})<2$.

The estimate \eqref{grad2} is also what is needed 
to prove $\langle (a_n-a_m) \cdot \nabla v_m,\nabla h\rangle \rightarrow 0$, since $a_n -a_m \rightarrow 0$ in $L^q(\Omega)$ for any $q<\infty$.

Thus, the proof of Theorem \ref{thm3} will be completed once we prove \eqref{grad2}.

\medskip

\textbf{2.~}Proof of \eqref{grad2}. Write for brevity $v=v_m$, $a=a_m$, $b=b_m$ (the constants below are independent of $m$). We extend $v$ to $\mathbb R^d$ by zero. It suffices to establish
\begin{equation}
\label{inv_holder}
(R/4)^{-d}\langle |\nabla v|^2\mathbf{1}_{B_{\frac{R}{4}}(x)}\rangle \leq C \biggl[\bigg(R^{-d}\langle |\nabla v|^\frac{2}{\theta}\mathbf{1}_{B_R}(x) \rangle\bigg)^{\theta} + R^{-d}\langle k^2 \rangle\biggr]
\end{equation}
for generic constants $\theta>1$, $C$, for all $R \leq R_0$, $x \in \Omega$, for some function $k \in L_{\loc}^{2+\epsilon}$, $\epsilon>0$. Then Gehring's Lemma will yield \eqref{grad2}.

Let us prove \eqref{inv_holder}. If $B_{\frac{R}{2}}(x) \subset \Omega$, then we put $w:=v-(v)_{B_{R}(x)},$ otherwise $w:=v$. Without loss of generality, $x=0$.
As in the proof of Proposition \ref{c_prop}, we 
fix $[0,1]$-valued smooth cut-off functions $\{\eta=\eta_{r_1,r_2}\}_{0<r_1<r_2<R}$ on $\mathbb R^d$ such that
$$
\eta=\left\{
\begin{array}{ll}1 & \text{ in } B_{r_1}, \\
0 & \text{ in } \mathbb R^d - \bar{B}_{r_2},
\end{array}
\right.
$$
satisfying \eqref{e1}-\eqref{e3}.
We multiply equation $-\nabla \cdot a \cdot \nabla w + b \cdot \nabla w=-f$ by $w\eta$, integrate over $\mathbb R^d$, and argue as in the proof of \eqref{proto_0} to obtain
\begin{align}
 \sigma\langle |\nabla w|^2 \eta\rangle & \leq \frac{C_1}{r_2-r_1}\|(\nabla w)\mathbf{1}_{B_{r_2}}\|_2\| w\mathbf{1}_{B_{r_2}}\|_2 \notag \\
& + C_2 \biggl(1+\frac{1}{(r_2-r_1)^2} \bigg)\|w\mathbf{1}_{B_{\frac{R}{2}}}\|^2_2 \notag \\
&  + |\langle f,w\eta\rangle|. \label{proto_2}
\end{align}
In comparison with \eqref{proto_0}, we now have an extra term $|\langle f,w\eta\rangle|$. We deal with it as follows:
\begin{align*}
|\langle f,w\eta\rangle| & = |\langle a \cdot \nabla g,(\nabla w)\eta + w\nabla \eta\rangle + \langle b \cdot \nabla g,w\eta\rangle| \\
& \leq \alpha \langle|\nabla w|^2 \eta \rangle + \alpha \langle w^2 |\nabla \eta|\rangle + \frac{\xi^2}{4\alpha}\langle|\nabla g|^2\eta\rangle + \gamma\langle |b|w^2\eta\rangle + \frac{1}{4\gamma} \langle |b| |\nabla g|^2 \eta\rangle \quad (\alpha,\gamma>0).
\end{align*}
Now, applying $b \in \mathbf{MF}$ and
substituting the result in \eqref{proto_2}, we obtain
\begin{align*}
(\sigma-\alpha-\gamma) \langle |\nabla w|^2 \eta\rangle & \leq \frac{C_1}{r_2-r_1}\|(\nabla w)\mathbf{1}_{B_{r_2}}\|_2\| w\mathbf{1}_{B_{r_2}}\|_2 \notag \\
& + C_2 \biggl(1+\frac{1}{(r_2-r_1)^2} \bigg)\|w\mathbf{1}_{B_R}\|^2_2   + C_3(\alpha,\gamma) \langle w^2 |\nabla \eta|\rangle + C_4(\alpha,\gamma)\|g\|_{W^{2,2}(B_R)}.
\end{align*}
Hence, fixing $\alpha$ and $\gamma$ sufficiently small so that  $\sigma-\alpha-\gamma>0$, we obtain
\begin{align*}
\langle |\nabla w|^2 \mathbf{1}_{r_1}\rangle & \leq \frac{C'_1}{r_2-r_1}\|(\nabla w)\mathbf{1}_{B_{r_2}}\|_2\| w\mathbf{1}_{B_{r_2}}\|_2 \notag \\
& + C'_2 \biggl(1+\frac{1}{(r_2-r_1)^2} \bigg)\|w\mathbf{1}_{B_{R}}\|^2_2  + C_4'(\alpha,\gamma)\|g\|_{W^{2,2}(B_{R})}.
\end{align*}
We now iterate this inequality in the same way as in the proof of Proposition \ref{c_prop}, selecting 
$$
r_1:=R-\frac{R}{2^{n-1}}, \quad r_2:=R-\frac{R}{2^{n}}, \quad n=1,2,\dots,
$$
arriving, upon taking $n \rightarrow \infty$, to 
$$
\|(\nabla w)\mathbf{1}_{B_{\frac{R}{4}}}\|^2_2 \leq C\biggl[\frac{1}{R^2}\|w\mathbf{1}_{B_{R}}\|^2_2 + \|g\|_{W^{2,2}(B_{R})}\biggr].
$$
By the Sobolev-Poincar\'{e} inequality (or by the Sobolev inequality, if $B_{\frac{R}{2}}(x) \not\subset \Omega$),
we have
$$
\|(\nabla w)\mathbf{1}_{B_{\frac{R}{4}}}\|^2_2 \leq C\biggl[\frac{1}{R^2}\|(\nabla w)\mathbf{1}_{B_{R}}\|^2_{\frac{2d}{d+2}} + \|g\|_{W^{2,2}(B_R)}\biggr], 
$$
so
$$
R^{-d}\langle |\nabla w|^2\mathbf{1}_{B_{\frac{R}{4}}}\rangle \leq C\biggl[\bigl(R^{-d}\langle|\nabla w|^{\frac{2}{\theta}}\mathbf{1}_{B_{R}}\rangle\bigr)^{\theta} + R^{-d}\|g\|_{W^{2,2}(B_{R})}\biggr],  \quad \theta=\frac{d+2}{d}.
$$
Now Gehring's Lemma yields \eqref{grad2} and thus ends the proof. \hfill \qed

\bigskip

\section{Further discussion} 

\label{disc_sect}

1. In Kinzebulatov-Sem\"{e}nov \cite{KiS4}, the authors show that applying the Lions variation approach for $\partial_t-\Delta + b \cdot \nabla$  in the Bessel space $\mathcal W^{1/2,2}$ rather than $L^2$ allows to enlarge the class of admissible vector fields from the classical form-bounded vector fields $\mathbf{F}_\delta$ to the weakly form-bounded vector fields $\mathbf{F}_\delta^{\scriptscriptstyle 1/2} \subset \mathbf{MF}_\delta$. (In fact, the class $\mathbf{F}_\delta$ is dictated by the Lions approach ran in $L^2$.) Hence one obtains  existence and uniqueness of weak solution to Cauchy problem for $\partial_t-\Delta + b \cdot \nabla$, $b \in \mathbf{F}_\delta^{\scriptscriptstyle 1/2}$ in $\mathcal W^{1/2,2}$. This result does not impose any assumptions on ${\rm div\,}b$, but requires $\delta<1$. Since having a divergence-free $b$  one expects to have no constraints of the value of $\delta$ except that it is finite, this result does not settle the question of a posteriori theory for \eqref{kolm}, even for $a=I$ and weakly-form bounded  $b$.

\smallskip

2. Requiring $b \in \mathbf{F}_\delta^{\scriptscriptstyle 1/2}$ with $\delta$ sufficiently small (without any assumptions on ${\rm div\,}b$) yields $\mathcal W^{1+\frac{1}{p}-,p}$-regularity theory of $\partial_t-\Delta + b \cdot \nabla$, with the interval of admissible $p$ expanding to $]1,\infty[$ as $\delta \downarrow 0$, see \cite{Ki, KiS1}. 

\smallskip

3. In absence of any assumptions on ${\rm div\,}b$, De Giorgi's method yields the Harnack inequality for \eqref{eq2} when $b \in \mathbf{F}_\delta$, $\delta<\sigma^2$. In view of the previous comment, one can ask if De Giorgi's method also works for $a=I$ and $b \in \mathbf{F}_\delta^{\scriptscriptstyle 1/2}$ with weak form-bound $\delta<1$.
One obstacle when working directly with $\mathbf{F}_\delta^{\scriptscriptstyle 1/2}$ is the need to handle non-local operators. Interestingly, a larger class $\mathbf{MF}_\delta$ allows one to stay in the local setting at expense of imposing additional assumptions on ${\rm div\,}b$. (One practical outcome of this is that when  one approximates $b$ by bounded smooth vector fields $b_n$, e.g.\,in the proofs of Theorems \ref{thm2} and \ref{thm3}, it is easier to control simultaneously the multiplicative form-bound of $b_n$ and the form-bounds of ${\rm div\,}b_{n,\pm}$, than to control the weak form-bound of $b_n$ and the form-bound of ${\rm div\,}b_{n,\pm}$.)

\smallskip

4. Both classes $\mathbf{MF}_\delta$ and $\mathbf{ BMO}^{-1}$ are contained in a larger class: $b \in [\mathcal S']^d$ such that
\begin{equation}
\label{mf_2}
|\langle b\varphi,\varphi\rangle| \leq \delta\|\nabla \varphi\|_2\|\varphi\|_2 + c_\delta\|\varphi\|_2^2 \quad \forall\,\varphi \in C_c^\infty.
\end{equation}
This class was considered in \cite{KiS4} where it was proved  that  \eqref{mf_2}, together with the hypothesis ``${\rm div\,}b$ in the Kato class of potentials with sufficiently small Kato norm'', provides a priori Gaussian upper bound on the heat kernel of \eqref{kolm}; an a priori Gaussian lower bound in \cite{KiS4} is proved  under somewhat stronger assumption \eqref{bb}.

\smallskip

5. There is an analogy between the approximation uniqueness for Dirichlet problem for \eqref{eq2}, discussed in Theorem \ref{thm3}, and the uniqueness of ``good solution'' to Dirichlet problem for non-divergence form elliptic equations studied by Krylov, Safonov and Nadirashvili among others, see discussion in \cite{Sa}. The analogy is not just formal: being able to treat a large class of drifts allows one to put non-divergence form equations in divergence form (this was exploited e.g.\,in \cite{KiS2} in the study of SDEs with diffusion coefficients critical discontinuities and form-bounded drifts.)

\smallskip

6. The iteration procedure used in the proof of Caccioppoli's inequality in Proposition \ref{c_prop} also works for the corresponding parabolic equation $\partial_t-\nabla \cdot a \cdot \nabla + b \cdot \nabla=0$ where the class $\mathbf{MF}_\delta$ is now defined as the class of time-inhomogeneous vector fields 
$b \in [L^1_{\loc}(\mathbb R_+ \times \mathbb R^d)]^d$ such that for a.e.\,$t \in \mathbb R_+$,
\begin{equation*}
\langle |b(t)|\varphi,\varphi\rangle   \leq \delta \|\nabla \varphi\|_2\|\varphi\|_2 + c_\delta\|\varphi\|_2^2, \quad\forall\, \varphi \in  W^{1,2}.
\end{equation*}
(furthermore, constant $c_\delta$ can be replaced by a function of time). We are interested, in particular, in applications to weak well-posedness of SDEs, which require regularity estimates on solution to Cauchy problem in $\mathbb R^d$
$$
(\partial_t  - \Delta  + b \cdot \nabla) u=|\mathsf{f}|g, \quad u(0)=0,
$$
where $\mathsf{f} \in \mathbf{MF}_\mu$, $g \in C_c^2$,
cf.\,\cite{KiM, KiS5} for details. The proof of such estimates for multiplicatively form-bounded $b$, $\mathsf{f}$ presents its own set of difficulties, which we plan to address elsewhere.


\begin{thebibliography}{99}

\bibitem[A]{A} D. Adams, Weighted nonlinear potential theory, {\em Trans. Amer. Math. Soc.}
\textbf{297} (1986), 73-94.

\bibitem[CWW]{CWW} S.Y.A.\;Chang, J.M.\;Wilson and T.H.\;Wolff, Some weighted norm inequalities concerning
the Schr\"{o}dinger operator, \newblock { \em Comment.\;Math.\;Helvetici}, \textbf{60} (1985), 217-246.


\bibitem[CLMS]{CLMS} R. Coifman, P.-L. Lions, Y. Meyer and S. Semmes, Compensated compactness and Hardy spaces,
{\em J. Math. Pures Appl.} \textbf{72} (1992), 247-286.


\bibitem[FV]{FV} S.\,Friedlander and V.\,Vicol, Global well-posedness for an advection-diffusion equation arising in
magneto-geostrophic dynamics, {\em Ann. Inst. H. Poincar\'{e} Anal. Non Lin\'{e}aire} \textbf{28}(2) (2011), 283-301.

\bibitem[G]{G} E.\,Giusti, Direct Methods in the Calculus of Variations, {\em World Scientific}, 2003.


\bibitem[F]{F} N.\,Filonov, On the regularity of solutions to the equation $-\Delta u + b \cdot \nabla u=0$, {\em J.\,Math.\,Sciences}, \textbf{195}(1) (2013), 98-108 (published in Zapiski Nauchnykh Seminarov POMI, \textbf{410} (2013), 168-186).

\bibitem[H]{H} T.\,Hara, A refined subsolution estimate of weak subsolutions to second order
linear elliptic equations with a singular vector field, {\em Tokyo J.\,Math.}, \textbf{38}(1) (2015), 75-98.

\bibitem[Ki]{Ki} D.\,Kinzebulatov, A new approach to the $L^p$-theory of $-\Delta + b\cdot\nabla$, and its applications to Feller processes with general drifts,
\newblock {\em Ann.~Scuola~Norm.~Sup.~Pisa Cl. Sci. (5)}, \textbf{17} (2017), 685-711. 

\bibitem[KiM]{KiM} D.\,Kinzebulatov and K.R.\,Madou, Stochastic equations with time-dependent singular drift, {\em J.\,Differential Equations}, to appear.


\bibitem[KiS1]{KiS1} D.\,Kinzebulatov and Yu.\,A.\,Sem\"{e}nov, On the theory of the Kolmogorov operator in the spaces $L^p$ and $C_\infty$, {\em Ann.~Scuola~Norm.~Sup.~Pisa Cl. Sci. (5)} \textbf{21} (2020), 1573-1647.

\bibitem[KiS2]{KiS2} D.\,Kinzebulatov and Yu.A.\,Sem\"{e}nov, Feller generators and stochastic differential equations with singular (form-bounded) drift, \newblock{\em Osaka J.\,Math.}, \textbf{58} (2021), 855-883.


\bibitem[KiS3]{KiS3} D.\,Kinzebulatov and Yu.\,A.\,Sem\"{e}nov, Heat kernel bounds for parabolic equations with singular (form-bounded) vector fields, {\em Math.\,Ann.}, to appear.

\bibitem[KiS4]{KiS4} D.\,Kinzebulatov and Yu.\,A.\,Sem\"{e}nov, Regularity for parabolic equations with singular non-zero divergence vector fields, {\em Preprint}, arXiv:2205.05169 (2022).

\bibitem[KiS5]{KiS5} D.\,Kinzebulatov and Yu.\,A.\,Sem\"{e}nov, {\em Sharp solvability for singular SDEs}, Preprint, arXiv:2110.11232.


\bibitem[KT]{KT} H.\,Koch and D.\,Tataru, Well-posedness for the Navier-Stokes equations, {\em Adv.
Math.} \textbf{157} (2001), 22-35.



\bibitem[KS]{KS} V.\,F.\,Kovalenko and Yu.\,A.\,Sem\"{e}nov,
{\newblock $C_0$-semigroups in $L^p(\mathbb R^d)$ and $C_\infty(\mathbb R^d)$ spaces generated by differential expression $\Delta+b\cdot\nabla$.} 
(Russian) {\em Teor. Veroyatnost. i Primenen.}, \textbf{35} (1990), 449-458; translation in {\em Theory Probab. Appl.} \textbf{35} (1990), 443-453.


\bibitem[MK]{MK}
A.\,J.\,Majda and P.\,R.\,Kramer, Simplified models for turbulent diffusion:
theory, numerical modelling, and physical phenomena, {\em Physics Reports} \textbf{314} (1999), 237-574. 


\bibitem[NU]{NU} A.\,F.\,Nazarov and N.\,N.\,Uraltseva, The Harnack inequality and related properties for solutions to elliptic and parabolic equations with divergence-free lower order coefficients, {\em Algebra i Analiz}, \textbf{23} (2011), 136-168.



\bibitem[O]{O} H.\,Osada, Diffusion processes with generators of generalized divergence form, {\em J. Math. Kyoto Univ.},
\textbf{27} (1987), 597-619.


\bibitem[QX]{QX} Z.\,Qian, G.\,Xi, Parabolic equations with singular divergence-free drift vector fields, {\em J. London Math. Soc.} \textbf{100} (1) (2019), 17-40.


\bibitem[Sa]{Sa} M.\,V.\,Safonov, Nonuniqueness for second-order elliptic equations with measurable coefficients, {\em SIAM J.\,Math.\,Anal.}, \textbf{30} (1999), 879-895.

\bibitem[S]{S} Yu.\,A.\,Sem\"{e}nov, \newblock Regularity theorems for parabolic equations, \newblock {\em J.\,Funct.\,Anal.}, \textbf{231} (2006), 375-417.

\bibitem[SSSZ]{SSSZ} G.\,Seregin, L.\,Silvestre, V.\,\v{S}verak and A.\,Zlato\v{s}, On divergence-free drifts, {\em J. Differential Equations}, \textbf{252}(1) (2012), 505-540.

\bibitem[Z]{Z} Q.\,S.\,Zhang, A strong regularity result for parabolic equations, {\em Comm.\,Math.\,Phys.} \textbf{244} (2004) 245-260.



\bibitem[Zh]{Zh} V.\,V.\,Zhikov, Remarks on the uniqueness of a solution
of the Dirichlet problem for second-order elliptic
equations with lower-order terms, {\em Funktsional. Anal.
i Prilozhen.}, \textbf{38} (2004), 15-28.


\end{thebibliography}
\end{document}